\newtheorem{theo}{Theorem}[section]
\newtheorem{coll}[theo]{Corollary}
\newtheorem{lemm}[theo]{Lemma}
\newtheorem{defn}[theo]{Definition}
\newtheorem{rem}[theo]{Remark}
\newcommand{\Hom}{{\rm Hom}}
\renewcommand{\hom}[3]{\mathrm{Hom}_{#1}(#2,#3)}
\newcommand{\HOM}[3]{\mathrm{HOM}_{#1}(#2,#3)}
\begin{document}
\sloppy

\title[Rickart and dual Rickart objects]{Rickart and dual Rickart objects in abelian categories: transfer via functors}

\author[S. Crivei]{Septimiu Crivei}

\address{Department of Mathematics, Babe\c s-Bolyai University, Str. M. Kog\u alniceanu 1,
400084 Cluj-Napoca, Romania} \email{crivei@math.ubbcluj.ro}

\author[G. Olteanu]{Gabriela Olteanu}

\address{Department of Statistics-Forecasts-Mathematics, Babe\c s-Bolyai University, Str. T. Mihali 58-60, 400591
Cluj-Napoca, Romania} \email{gabriela.olteanu@econ.ubbcluj.ro}

\subjclass[2000]{18E10, 18E15, 16D90, 16S50, 16T15} \keywords{Abelian category, (dual) Rickart object, (dual) Baer
object, regular object, adjoint functors, Grothendieck category, (graded) module, comodule, endomorphism ring.}

\begin{abstract} We study the transfer of (dual) relative Rickart properties via functors between abelian categories,
and we deduce the transfer of (dual) relative Baer property. We also give applications to Grothendieck categories,
comodule categories and (graded) module categories, with emphasis on endomorphism rings.
\end{abstract}

\date{December 9, 2017}

\thanks{We would like to thank the referee for comments and suggestions, which improved the presentation of the paper. 
We acknowledge the support of the grant PN-II-ID-PCE-2012-4-0100.}

\maketitle

\section{Introduction}

Rickart and dual Rickart objects in abelian categories were introduced and studied by Crivei and K\"or \cite{CK} with a
twofold motivation. First, they generalize relative regular objects in abelian categories in the sense of D\u asc\u
alescu, N\u ast\u asescu, Tudorache and D\u au\c s \cite{DNT,DNTD,Daus}. 
The primary interest in studying these concept traces back to the work of von Neumann
\cite{vN} on regular rings and Zelmanowitz \cite{Zel} on regular modules. Given two objects $M$ and $N$ of 
an arbitrary category $\mathcal{C}$, $N$ is called \emph{$M$-regular} 
if for every morphism $f:M\to N$, there exists a morphism $g:N\to M$ such that $f=fgf$.
When $\mathcal{A}$ is an abelian category, $N$ is $M$-regular if and only if for every morphism $f:M\to N$, 
${\rm Ker}(f)$ is a direct summand of $M$ and ${\rm Im}(f)$ is a direct summand of $N$ 
\cite[Proposition~3.1]{DNTD}. The main idea from \cite{CK} was to split the study of these two conditions characterizing 
relative regularity. Thus, $N$ is called $M$-Rickart if for every morphism $f:M\to N$, 
${\rm Ker}(f)$ is a direct summand of $M$; also, $N$ is called dual $M$-Rickart 
if for every morphism $f:M\to N$, ${\rm Im}(f)$ is a direct summand of $N$. 
Hence an object of an abelian category is relative regular 
if and only if it is relative Rickart and dual relative Rickart \cite[Corollary~2.3]{CK}. 
Since relative Rickart and dual relative Rickart properties are categorical duals to each other, 
the setting of abelian categories allows one to reduce their investigation to the study of one of them, 
and afterwards to apply the duality principle in order to obtain the corresponding results for the other one.

Secondly, our concepts generalize to the level of abelian categories Rickart and dual Rickart modules 
in the sense of Lee, Rizvi and Roman \cite{LRR10,LRR11,LRR12}, and in particular, Baer and dual Baer modules 
studied by Rizvi and Roman \cite{RR04,RR09} and Keskin T\"ut\"unc\"u, Smith, Toksoy and Tribak \cite{KST,KT}. 
A unified approach of Baer and dual Baer modules via Baer-Galois connections was given by Olteanu in \cite{GO}, 
following the approach by Crivei from \cite{C13}. The root of (dual) Baer and (dual) Rickart modules 
traces back to the work of Kaplansky \cite{K} on Baer rings and Maeda \cite{Maeda} on Rickart rings. 
Given two objects $M$ and $N$ of an abelian category $\mathcal{A}$, $N$ is called $M$-Baer 
if for every family $(f_i)_{i\in I}$ with each $f_i\in \Hom_{\mathcal{A}}(M,N)$, $\bigcap_{i\in I}
{\rm Ker}(f_i)$ is a direct summand of $M$, and dual $M$-Baer if for every family $(f_i)_{i\in I}$ 
with each $f_i\in \Hom_{\mathcal{A}}(M,N)$, $\sum_{i\in I} {\rm Im}(f_i)$ is a direct summand of $N$. 
If there exists the product $N^I$ for every set $I$, then $N$ is $M$-Baer if and only if $N^I$ is $M$-Rickart for every set $I$;
also, if there exists the coproduct $M^{(I)}$ for every set $I$, then $N$ is dual $M$-Baer if and only
if $N$ is dual $M^{(I)}$-Rickart for every set $I$ \cite[Lemma~6.2]{CK}. 
Hence the study of (dual) relative Baer objects can be easily deduced from the study of (dual) relative Rickart objects.

Crivei and K\"or \cite{CK} investigated general properties of (dual) Rickart objects, (co)products of 
(dual) relative Rickart objects and classes all of whose objects are (dual) relative Rickart. 
Their theory was applied to the study of relative regular objects in abelian categories 
in order to obtain some easier proofs and more detailed results, and to the study of (dual) Baer objects in abelian categories, 
whose theory has been naturally developed from the theory of (dual) Rickart objects. Moreover, 
the framework of abelian categories allowed a wide use of the duality principle in order to obtain immediate dualizations, 
and a unified treatment of notions that have been considered separately in the literature. 

The transfer of (dual) relative Rickart properties via (additive) functors between abelian categories as well as their
transfer to endomorphism rings was left to be studied in a separate paper. The aim of the present paper is to complete
this study. We consider the behaviour of (dual) relative Rickart objects and (dual) relative Baer objects under functors
between abelian categories as well as under taking their endomorphism rings in module categories. The statements of our
results usually have two parts, one concerning relative Rickart (Baer) objects and the other one concerning dual
relative Rickart (Baer) objects. Usually we only prove one of them, the other one following by the duality principle in
abelian categories. In general we have considered the case of covariant additive functors, while the contravariant case
may be treated by dualization.

In Section 2, we show that a left exact fully faithful functor preserves and reflects the relative Rickart property. As
a consequence, for a Grothendieck category $\mathcal{A}$ with a generator $U$ and $R={\rm End}_{\mathcal{A}}(U)$, the
functor $S={\rm Hom}_{\mathcal{A}}(U,-):\mathcal{A}\to {\rm Mod}(R)$ from $\mathcal{A}$ to the category ${\rm Mod}(R)$
of right $R$-modules preserves and reflects the relative Rickart property. Other applications are given to module and
comodule categories. We also prove that a Maschke functor reflects the relative Rickart property. Next we consider the
situation of an adjoint pair $(L,R)$ of covariant functors $L:\mathcal{A}\to \mathcal{B}$ and $R:\mathcal{B}\to
\mathcal{A}$ between abelian categories with counit $\varepsilon:LR\to 1_{\mathcal{B}}$. For two objects $M,N\in {\rm
Stat}(R)=\{B\in \mathcal{B}\mid \varepsilon_B \textrm{ is an isomorphism}\}$, we prove that $N$ is $M$-Rickart in
$\mathcal{B}$ if and only if $R(N)$ is $R(M)$-Rickart in $\mathcal{A}$ and for every morphism $f:M\to N$, ${\rm Ker}(f)$
is $M$-cyclic if and only if $R(N)$ is $R(M)$-Rickart in $\mathcal{A}$ and for every morphism $f:M\to N$, ${\rm
Ker}(f)\in {\rm Stat}(R)$. We also discuss the case of an adjoint pair of contravariant functors. 

In Sections 3 and 4 we show how our results on (dual) relative Rickart objects can be used for studying (dual) relative
Baer objects in abelian categories as well as endomorphism rings in (graded) module categories. In Section 3 we recall
the connection between (dual) relative Baer objects and (dual) relative Rickart objects in abelian categories. Then we
may use the results established in Section 2 in order to naturally deduce corresponding properties for (dual) relative
Baer objects.

In Section 4 we discuss the transfer of (dual) self-Rickart and (dual) self-Baer properties to endomorphism rings of
(graded) modules. We derive the following theorem relating the self-Rickart properties of a module and of its
endomorphism ring, which enriches existing results such as \cite[Theorem~3.9]{LRR10}: a right $R$-module $M$ with
$S={\rm End}_R(M)$ is self-Rickart if and only if $S$ is a self-Rickart right $S$-module and for every $f\in S$, ${\rm
Ker}(f)$ is an $M$-cyclic object if and only if $S$ is a self-Rickart right $S$-module and for every $f\in S$, ${\rm
Ker}(f)\in {\rm Stat}({\rm Hom}_R(M,-))$. We also prove that $M$ is self-Rickart if and only if $S$ is a self-Rickart
right $S$-module and for every $f\in S$, ${\rm ker}(f)$ is a locally split monomorphism if and only if $S$ is a
self-Rickart right $S$-module and $M$ is $k$-quasi-retractable. We present corresponding results for self-Baer modules.
We also consider a similar application to categories of graded modules.

\section{(Dual) relative Rickart objects}

Let $\mathcal{A}$ be an abelian category. For every morphism $f:M\to N$ in $\mathcal{A}$ we have the following notation
and \emph{analysis} involving its kernel, cokernel, image and coimage:
$$\SelectTips{cm}{}
\xymatrix{
{\rm Ker}(f) \ar[r]^-{{\rm ker}(f)} & M \ar[r]^f \ar[d]_{{\rm coim}(f)} & N \ar[r]^-{{\rm coker}(f)} & {\rm Coker}(f) \\
& {\rm Coim}(f) \ar[r]_-{\overline{f}} & {\rm Im}(f) \ar[u]_{{\rm im}(f)} &  
}$$
where $\overline{f}$ is an isomorphism. 

Recall that a morphism $f:A\to B$ is called a \emph{section} (or \emph{split monomorphism}) if there is a morphism
$f':B\to A$ such that $f'f=1_A$, and a \emph{retraction} (or \emph{split epimorphism}) if there is a morphism $f':B\to
A$ such that $ff'=1_B$. 

Now let us recall from \cite{CK,DNTD} the definitions of the main concepts of the present paper.

\begin{defn} \rm Let $M$ and $N$ be objects of an abelian category $\mathcal{A}$. Then $N$ is called:
\begin{enumerate}
\item \emph{$M$-Rickart} if the kernel of every morphism $f:M\to N$ is a section, or equivalently, the coimage of every
morphism $f:M\to N$ is a retraction.
\item \emph{dual $M$-Rickart} if the cokernel of every morphism $f:M\to N$ is a retraction, or equivalently, the image
of every morphism $f:M\to N$ is a section.
\item \emph{$M$-regular} if $N$ is $M$-Rickart and dual $M$-Rickart.
\item \emph{self-Rickart} if $N$ is $N$-Rickart.
\item \emph{dual self-Rickart} if $N$ is dual $N$-Rickart.
\item \emph{self-regular} if $N$ is $N$-regular.
\end{enumerate}
\end{defn}

\begin{rem} \rm (a) The definitions of (dual) relative Rickart objects can be reformulated as follows: 
$N$ is called $M$-Rickart if for every morphism $f:M\to N$, ${\rm Ker}(f)$ is a direct summand of $M$, 
and dual $M$-Rickart if for every morphism $f:M\to N$, ${\rm Im}(f)$ is a direct summand of $N$.

(b) Let us point out the difference between the terminology used in the theory of relative regular
objects in \cite{DNTD} and continued by us in \cite{CK} and in the present paper, and the terminology used in the theory
of (dual) relative Rickart modules in \cite{LRR10,LRR11}. 
In the latter the roles of $M$ and $N$ are swapped, so that a module $M$ is called $N$-Rickart
if and only if the kernel of every morphism $f:M\to N$ is a section, and
dual $N$-Rickart if and only if the cokernel of every morphism $f:M\to N$ is a retraction.
Also, our (dual) self-Rickart objects are simply called (dual)
Rickart modules in module categories, while relative regular objects were called relative endoregular in \cite{LRR13}. 
\end{rem}

We begin our study of the transfer of (dual) relative Rickart properties via (additive) functors with the easy case of a
fully faithful covariant functor. 

\begin{theo} \label{t:ff} Let $F:\mathcal{A}\to \mathcal{B}$ be a fully faithful covariant functor between abelian
categories, and let $M$ and $N$ be objects of $\mathcal{A}$. 
\begin{enumerate}
\item Assume that $F$ is left exact. Then $N$ is $M$-Rickart in $\mathcal{A}$ if and only if $F(N)$ is $F(M)$-Rickart
in $\mathcal{B}$.
\item Assume that $F$ is right exact. Then $N$ is dual $M$-Rickart in $\mathcal{A}$ if and only if $F(N)$ is dual
$F(M)$-Rickart in $\mathcal{B}$.
\end{enumerate}
\end{theo}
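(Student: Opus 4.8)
The plan is to prove part (1) directly and to obtain part (2) by the duality principle, exactly as the authors indicate. Before touching kernels, I would isolate two purely formal facts. The first is that a fully faithful functor both \emph{preserves} and \emph{reflects} sections: preservation is immediate, since $f'f=1_A$ forces $F(f')F(f)=1_{F(A)}$; for reflection, if $F(f)$ admits a left inverse $g$ in $\mathcal{B}$, then fullness gives $g=F(h)$ for some $h$, and $F(hf)=F(h)F(f)=gF(f)=1=F(1_A)$, so faithfulness yields $hf=1_A$, whence $f$ is a section. (The corresponding statements for retractions are dual, and will be needed for part (2).)

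The second ingredient is that a left exact additive functor preserves kernels, so that for every $f\colon M\to N$ there is a canonical isomorphism $\theta$ with $\mathrm{ker}(F(f))\circ\theta=F(\mathrm{ker}(f))$. Since composing a section with an isomorphism on either side again produces a section, this identification lets me pass freely between the assertions ``$\mathrm{ker}(f)$ is a section in $\mathcal{A}$'' and ``$\mathrm{ker}(F(f))$ is a section in $\mathcal{B}$''.

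With these in hand both implications are short. For the forward direction, assuming $N$ is $M$-Rickart I would take an arbitrary $g\colon F(M)\to F(N)$; by fullness $g=F(f)$, and $M$-Rickartness makes $\mathrm{ker}(f)$ a section, so $F(\mathrm{ker}(f))$ is a section, hence so is $\mathrm{ker}(g)=\mathrm{ker}(F(f))$ via $\theta$, giving that $F(N)$ is $F(M)$-Rickart. For the converse, given any $f\colon M\to N$ in $\mathcal{A}$, the hypothesis that $F(N)$ is $F(M)$-Rickart makes $\mathrm{ker}(F(f))$ a section, hence $F(\mathrm{ker}(f))$ a section, and reflection of sections forces $\mathrm{ker}(f)$ to be a section; thus $N$ is $M$-Rickart.

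The step I expect to demand the most care is the bookkeeping around the isomorphism $\theta$ identifying $F(\mathrm{Ker}(f))$ with $\mathrm{Ker}(F(f))$, together with the (elementary but essential) observation that the section property is stable under composition with isomorphisms; once this is pinned down the argument is essentially formal, and in particular no surjectivity or injectivity hypotheses on $F$ beyond fullness and faithfulness are used. Part (2) then follows by applying part (1) in the opposite categories: full faithfulness is self-dual, a right exact functor $F$ becomes a left exact functor on the opposites, and the dual $M$-Rickart property of $N$ in $\mathcal{A}$ corresponds to the $M$-Rickart property in $\mathcal{A}^{\mathrm{op}}$, with images/sections replaced by coimages/retractions throughout.
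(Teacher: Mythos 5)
Your proposal is correct and follows essentially the same route as the paper's own proof: lift morphisms along $F$ by fullness, identify $\mathrm{ker}(F(f))$ with $F(\mathrm{ker}(f))$ via left exactness, and use that a fully faithful functor reflects sections, with part (2) obtained by duality. The only difference is that you make explicit the bookkeeping (the canonical isomorphism $\theta$ and the proof that fully faithful functors reflect sections) which the paper treats as known.
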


\begin{proof} 
(1) Assume that $N$ is $M$-Rickart. Let $g:F(M)\to F(N)$ be a morphism in $\mathcal{B}$. Since $F$ is full, we have
$g=F(f)$ for some morphism $f:M\to N$ in $\mathcal{A}$. Since $N$ is $M$-Rickart, $k={\rm ker}(f):K\to M$ is a section.
It follows that $F(k)$ is a section. Since $F$ is left exact, we have ${\rm ker}(g)={\rm ker}(F(f))=F({\rm
ker}(f))=F(k)$. Hence ${\rm ker}(g)=F(k)$ is a section, which shows that $F(N)$ is $F(M)$-Rickart.

Conversely, assume that $F(N)$ is $F(M)$-Rickart. Let $f:M\to N$ be a morphism in $\mathcal{A}$ with kernel $k:K\to M$.
Since $F(N)$ is $F(M)$-Rickart, $l={\rm ker}(F(f)):L\to F(M)$ is a section. Since $F$ is left exact, we have $l={\rm
ker}(F(f))=F({\rm ker}(f))=F(k)$. Since $F$ is fully faithful, it reflects sections. Hence ${\rm ker}(f)=k$ is a
section, which shows that $N$ is $M$-Rickart. 
\end{proof}

For Grothendieck categories we have the following corollary.

\begin{coll} \label{c:gp} Let $\mathcal{A}$ be a Grothendieck category with a generator $U$, $R={\rm
End}_{\mathcal{A}}(U)$, $S={\rm Hom}_{\mathcal{A}}(U,-):\mathcal{A}\to {\rm Mod}(R)$, and let $M$ and $N$ be objects
of $\mathcal{A}$. Then $N$ is an $M$-Rickart object of $\mathcal{A}$ if and only if $S(N)$ is an $S(M)$-Rickart right
$R$-module.
\end{coll}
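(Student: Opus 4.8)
The plan is to obtain this as an immediate application of Theorem~\ref{t:ff}(1), once we verify that $S={\rm Hom}_{\mathcal{A}}(U,-)\colon \mathcal{A}\to {\rm Mod}(R)$ satisfies its hypotheses, namely that $S$ is a fully faithful left exact covariant functor between abelian categories. The target ${\rm Mod}(R)$ is abelian and $S$ is covariant; moreover $S$ is additive, since for every object $X$ the group ${\rm Hom}_{\mathcal{A}}(U,X)$ carries the natural right action of $R={\rm End}_{\mathcal{A}}(U)$ by precomposition, composition is biadditive, and $S$ therefore preserves finite biproducts. Thus only two points genuinely need attention: left exactness and full faithfulness.

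Left exactness is the standard fact that a covariant hom functor preserves kernels. Concretely, given a morphism $f:M\to N$ in $\mathcal{A}$ with kernel $k:K\to M$, any morphism $\varphi:U\to M$ with $f\varphi=0$ factors uniquely through $k$, which says precisely that $S(k)$ is the kernel of $S(f)$ in ${\rm Mod}(R)$; hence $S$ is left exact. This is also the property the proof of Theorem~\ref{t:ff}(1) repeatedly uses in the identification ${\rm ker}(S(f))=S({\rm ker}(f))$.

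Full faithfulness is the real content, and it is exactly where the hypothesis that $U$ is a generator of the Grothendieck category $\mathcal{A}$ enters, through the Gabriel--Popescu theorem: for a generator $U$ of a Grothendieck category with $R={\rm End}_{\mathcal{A}}(U)$, the functor $S={\rm Hom}_{\mathcal{A}}(U,-)$ is fully faithful (and in fact admits an exact left adjoint, given by the tensor functor $-\otimes_R U$). I expect this to be the main obstacle, in the following sense: being a generator yields directly only the \emph{faithfulness} of $S$, whereas the theorem genuinely requires full faithfulness, both to lift an arbitrary module morphism $S(M)\to S(N)$ to a morphism $M\to N$ (fullness, used in the forward direction) and to reflect sections (faithfulness together with fullness, used in the converse direction). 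It is precisely the Gabriel--Popescu theorem that supplies this.

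With these three properties in hand, Theorem~\ref{t:ff}(1) applies with $F=S$: $N$ is $M$-Rickart in $\mathcal{A}$ if and only if $S(N)$ is $S(M)$-Rickart in ${\rm Mod}(R)$, which is the asserted equivalence. Since each $S(X)$ is a right $R$-module under the precomposition action described above, $S$ indeed lands in ${\rm Mod}(R)$, so the statement is recovered exactly as phrased.
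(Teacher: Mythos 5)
Your proposal is correct and follows essentially the same route as the paper: invoke the Gabriel--Popescu theorem to get that $S$ is fully faithful, note that $S$ is left exact, and apply Theorem~\ref{t:ff}(1). The only cosmetic difference is that you verify left exactness directly from the universal property of kernels, whereas the paper deduces it from the fact that $S$ is a right adjoint (of the exact left adjoint $T$ supplied by Gabriel--Popescu).
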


\begin{proof} By the Gabriel-Popescu Theorem \cite[Chapter~X, Theorem~4.1]{St}, $S$ is a fully faithful functor which
has an exact left adjoint $T:{\rm Mod}(R)\to \mathcal{A}$. Since $S$ is a right adjoint, it is left exact. Then the
conclusion follows by Theorem \ref{t:ff}.  
\end{proof}

By an \emph{adjoint triple} of functors we mean a triple $(L,F,R)$ of covariant functors $F:\mathcal{A}\to \mathcal{B}$
and $L,R:\mathcal{B}\to \mathcal{A}$ such that $(L,F)$ and $(F,R)$ are adjoint pairs of functors. Then $F$ is an exact
functor as a left and right adjoint. Note that $L$ is fully faithful if and only if $R$ is fully faithful
\cite[Lemma~1.3]{DT}. Then Theorem \ref{t:ff} yields the following consequence.  

\begin{coll} \label{c:tripleff} Let $(L,F,R)$ be an adjoint triple of covariant functors $F:\mathcal{A}\to \mathcal{B}$
and $L,R:\mathcal{B}\to \mathcal{A}$ between abelian categories. 
\begin{enumerate}
\item Let $M$ and $N$ be objects of $\mathcal{A}$, and assume that $F$ is fully faithful. Then $N$ is (dual)
$M$-Rickart in $\mathcal{A}$ if and only if $F(N)$ is (dual) $F(M)$-Rickart in $\mathcal{B}$.
\item Let $M$ and $N$ be objects of $\mathcal{B}$, and assume that $L$ (or $R$) is fully faithful. Then: 
\begin{enumerate}[(i)] 
\item $N$ is $M$-Rickart in $\mathcal{B}$ if and only if $R(N)$ is $R(M)$-Rickart in $\mathcal{A}$.
\item $N$ is dual $M$-Rickart in $\mathcal{B}$ if and only if $L(N)$ is dual $L(M)$-Rickart in $\mathcal{A}$.
\end{enumerate}
\end{enumerate}
\end{coll}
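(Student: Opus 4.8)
The plan is to recognize that each assertion is a direct instance of Theorem~\ref{t:ff} applied to a suitably chosen functor, once the relevant exactness and fully faithfulness properties have been recorded. The discussion preceding the corollary already supplies the two facts I would lean on throughout: that $F$ is exact, being simultaneously a left and a right adjoint, and that $L$ is fully faithful if and only if $R$ is fully faithful (via \cite[Lemma~1.3]{DT}).

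For part (1), the functor $F$ is fully faithful and exact, hence in particular both left exact and right exact. Applying Theorem~\ref{t:ff}(1) handles the $M$-Rickart case and applying Theorem~\ref{t:ff}(2) handles the dual $M$-Rickart case, so part (1) follows immediately.

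For part (2), the essential move is to apply Theorem~\ref{t:ff} not to $F$ but to the functors $R$ and $L$, which go $\mathcal{B}\to\mathcal{A}$; this interchanges the roles of the two categories relative to the statement of the theorem. First I would note that the hypothesis that $L$ (or $R$) is fully faithful forces, via \cite[Lemma~1.3]{DT}, that both $L$ and $R$ are fully faithful. For (i), the functor $R$ is a right adjoint, hence left exact, and it is fully faithful; applying Theorem~\ref{t:ff}(1) with $R$ in place of $F$ gives that $N$ is $M$-Rickart in $\mathcal{B}$ if and only if $R(N)$ is $R(M)$-Rickart in $\mathcal{A}$. For (ii), the functor $L$ is a left adjoint, hence right exact, and it is fully faithful; applying Theorem~\ref{t:ff}(2) with $L$ in place of $F$ gives the corresponding equivalence for the dual Rickart property.

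The only point requiring attention, rather than a genuine obstacle, is the bookkeeping of which exactness condition is paired with which functor and with which half of Theorem~\ref{t:ff}: the Rickart property calls for a left exact functor and is therefore handled by the right adjoint $R$, while the dual Rickart property calls for a right exact functor and is therefore handled by the left adjoint $L$. Once this matching is set up correctly, no further computation is needed and the duality principle is not even required, since both halves are produced explicitly.
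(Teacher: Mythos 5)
Your proposal is correct and matches the paper's own (implicit) argument exactly: the paper proves this corollary by noting, just before the statement, that $F$ is exact as a left and right adjoint and that $L$ is fully faithful if and only if $R$ is (by \cite[Lemma~1.3]{DT}), and then invoking Theorem~\ref{t:ff}. Your bookkeeping of which exactness property pairs with which functor --- $R$ right adjoint, hence left exact, for the Rickart case; $L$ left adjoint, hence right exact, for the dual Rickart case --- is precisely the intended application.
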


Let $\varphi:R\to S$ be a ring homomorphism. Following \cite[Chapter~IX, p.105]{St}, consider the following covariant
functors: extension of scalars $\varphi^*:{\rm Mod}(R)\to {\rm Mod}(S)$ given on objects by $\varphi^*(M)=M\otimes_RS$,
restriction of scalars $\varphi_*:{\rm Mod}(S)\to {\rm Mod}(R)$ given on objects by $\varphi_*(N)=N$, and
$\varphi^!:{\rm Mod}(R)\to {\rm Mod}(S)$ given on objects by $\varphi^!(M)={\rm Hom}_R(S,M)$. Then
$(\varphi^*,\varphi_*,\varphi^!)$ is an adjoint triple of functors. 

\begin{coll} Let $\varphi:R\to S$ be a ring epimorphism, and let $M$ and $N$ be right $S$-modules. Then $N$ is a (dual)
$M$-Rickart right $S$-module if and only if $N$ is a (dual) $M$-Rickart right $R$-module.
\end{coll}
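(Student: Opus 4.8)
The plan is to deduce this from Corollary \ref{c:tripleff}(1) applied to the adjoint triple $(\varphi^*,\varphi_*,\varphi^!)$, whose middle functor is the restriction of scalars $\varphi_*:{\rm Mod}(S)\to {\rm Mod}(R)$. The single substantive ingredient is the classical characterization of ring epimorphisms: a ring homomorphism $\varphi:R\to S$ is an epimorphism in the category of rings if and only if the multiplication map $S\otimes_R S\to S$ is an isomorphism, equivalently if and only if the restriction of scalars functor $\varphi_*$ is fully faithful (see, e.g., \cite{St}).

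First I would recall this characterization and conclude from it that, since $\varphi$ is a ring epimorphism, the functor $\varphi_*$ is fully faithful. In the adjoint triple $(\varphi^*,\varphi_*,\varphi^!)$, matching notation with the definition of an adjoint triple, the functor playing the role of $F$ (going from $\mathcal{A}={\rm Mod}(S)$ to $\mathcal{B}={\rm Mod}(R)$) is exactly $\varphi_*$, while $\varphi^*$ and $\varphi^!$ are the two functors ${\rm Mod}(R)\to {\rm Mod}(S)$ playing the roles of $L$ and $R$; here $(\varphi^*,\varphi_*)$ and $(\varphi_*,\varphi^!)$ are the required adjoint pairs, as already noted after the definition of $\varphi^*,\varphi_*,\varphi^!$. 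Thus the hypothesis of Corollary \ref{c:tripleff}(1) --- that the middle functor $\varphi_*$ be fully faithful --- is satisfied.

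Applying Corollary \ref{c:tripleff}(1) with the objects $M,N\in {\rm Mod}(S)$, I obtain that $N$ is (dual) $M$-Rickart in ${\rm Mod}(S)$ if and only if $\varphi_*(N)$ is (dual) $\varphi_*(M)$-Rickart in ${\rm Mod}(R)$. Finally, since restriction of scalars leaves the underlying additive structure unchanged, $\varphi_*(M)=M$ and $\varphi_*(N)=N$ as right $R$-modules, which yields precisely the asserted equivalence. I do not anticipate a genuine obstacle here: the whole argument reduces to quoting the epimorphism characterization and invoking Corollary \ref{c:tripleff}(1), the only point requiring care being the correct identification of $\varphi_*$ with the role of $F$ in the adjoint triple (and hence the correct assignment of the source and target categories).
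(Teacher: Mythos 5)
Your proposal is correct and coincides with the paper's own proof: the paper likewise observes that a ring epimorphism makes the restriction of scalars functor $\varphi_*$ fully faithful (citing Stenstr\"om) and then invokes Corollary \ref{c:tripleff} for the adjoint triple $(\varphi^*,\varphi_*,\varphi^!)$, exactly as you do via part (1) with $F=\varphi_*$, $\mathcal{A}={\rm Mod}(S)$ and $\mathcal{B}={\rm Mod}(R)$. Your extra care in matching $\varphi_*$ to the middle functor $F$ and noting that $\varphi_*$ acts as the identity on underlying groups only makes explicit what the paper leaves implicit.
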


\begin{proof} Since $\varphi:R\to S$ is a ring epimorphism, the restriction of scalars functor $\varphi_*:{\rm
Mod}(S)\to {\rm Mod}(R)$ is fully faithful \cite[Chapter~XI, Proposition~1.2]{St}. Then use Corollary \ref{c:tripleff}
for the adjoint triple of functors $(\varphi^*,\varphi_*,\varphi^!)$.
\end{proof}

Let $\mathcal{A}$ be an abelian category and let $\mathcal{C}$ be a full subcategory of $\mathcal{A}$. Then
$\mathcal{C}$ is called a \emph{reflective} (\emph{coreflective}) subcategory of $\mathcal{A}$ if the inclusion functor
$i:\mathcal{C}\to \mathcal{A}$ has a left (right) adjoint. In this case $i$ is fully faithful. Then Theorem \ref{t:ff} yields the following corollary. 

\begin{coll} \label{c:rc} Let $\mathcal{A}$ be an abelian category, $\mathcal{C}$ an abelian full subcategory of
$\mathcal{A}$ and $i:\mathcal{C}\to \mathcal{A}$ the inclusion functor. Let $M$ and $N$ be objects of $\mathcal{C}$. 
\begin{enumerate}
\item Assume that $\mathcal{C}$ is a reflective subcategory of $\mathcal{A}$. 
Then $N$ is $M$-Rickart in $\mathcal{C}$ if and only if $i(N)$ is $i(M)$-Rickart in $\mathcal{A}$.
\item Assume that $\mathcal{C}$ is a coreflective subcategory of $\mathcal{A}$. 
Then $N$ is (dual) $M$-Rickart in $\mathcal{C}$ if and only if $i(N)$ is (dual) $i(M)$-Rickart in $\mathcal{A}$.
\end{enumerate}
\end{coll}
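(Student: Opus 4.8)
\bigskip

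The plan is to obtain the statement as a direct application of Theorem~\ref{t:ff} to the functor $F=i$. Two things must be checked: that $i$ is fully faithful, and that it is exact on the appropriate side. The first is automatic, the inclusion of any full subcategory being fully faithful (as recorded just before the statement). For the second I would use the direction of the defining adjunction together with the standard fact that a functor possessing a left adjoint is a right adjoint, hence left exact, whereas a functor possessing a right adjoint is a left adjoint, hence right exact.

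For part~(1), since $\mathcal{C}$ is reflective the inclusion $i$ has a left adjoint, so $i$ is a right adjoint and therefore left exact. As $i$ is fully faithful and left exact, Theorem~\ref{t:ff}(1) applies verbatim with $F=i$ and gives that $N$ is $M$-Rickart in $\mathcal{C}$ if and only if $i(N)$ is $i(M)$-Rickart in $\mathcal{A}$.

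For part~(2) the situation is formally dual. Since $\mathcal{C}$ is coreflective, $i$ has a right adjoint, so $i$ is a left adjoint and therefore right exact; Theorem~\ref{t:ff}(2) then yields the dual Rickart equivalence. Equivalently, this is part~(1) read in the opposite categories, under which a coreflective subcategory becomes reflective and the two Rickart properties are interchanged.

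The one point I would pause over is the reading of the parenthetical \emph{(dual)} in part~(2). Right exactness of $i$ delivers the dual Rickart equivalence immediately, but obtaining the plain Rickart equivalence as well would require $i$ to be left exact, i.e.\ to preserve the kernels of morphisms of $\mathcal{C}$. This is not formally forced by coreflectivity alone, and here is how I would settle it: the familiar coreflective classes on which the inclusion fails to preserve kernels—such as the divisible abelian groups, where the kernel $\mathbb{Z}$ of $\mathbb{Q}\to\mathbb{Q}/\mathbb{Z}$ leaves the class—are exactly the ones that fail to be abelian, the canonical morphism from the coimage to the image not being an isomorphism. This strongly indicates that for a \emph{coreflective abelian} subcategory the inclusion is automatically exact, so that both halves of Theorem~\ref{t:ff} apply; verifying this implication, that the abelian structure of $\mathcal{C}$ compels $i$ to preserve kernels, is the only genuine matter to resolve.

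\bigskip
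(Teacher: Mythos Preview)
Your approach---invoke Theorem~\ref{t:ff} with $F=i$, using that $i$ is fully faithful and that the relevant adjunction makes it left exact (reflective case) or right exact (coreflective case)---is exactly the paper's; the paper's entire proof is the sentence ``Then Theorem~\ref{t:ff} yields the following corollary.''

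You have in fact been more careful than the source on one point. The non-dual Rickart half of part~(2) needs $i$ to preserve kernels, which right exactness alone does not supply. The paper does not address this here, and in the parallel Baer result (Corollary~\ref{c:rcbaer}) simply asserts that $i$ is ``fully faithful exact'' in the coreflective case without justification. Your heuristic---that the stock coreflective classes where kernel-preservation fails (divisible groups, etc.) are precisely the ones that fail to be abelian---correctly locates the issue, and leaves the same residual verification the paper leaves implicit: that an \emph{abelian} coreflective full subcategory has exact inclusion. So there is no divergence in method; you have merely made explicit a step the paper takes for granted.
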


Following \cite[Section~2.2]{DNR}, let $C$ be a coalgebra over a field $k$, and let ${}^C\mathcal{M}$ be the
(Grothendieck) category of left $C$-comodules. Left $C$-comodules may be and will be identified with rational right
$C^*$-comodules, where $C^*={\rm Hom}_{k}(C,k)$. Consider the inclusion functor $i:{}^C\mathcal{M}\to {\rm Mod}(C^*)$,
and the functor ${\rm Rat}:{\rm Mod}(C^*)\to {}^C\mathcal{M}$ which associates to every right $C^*$-module its rational
$C^*$-submodule. Then $i$ is a fully faithful exact functor and ${\rm Rat}$ is a right adjoint to $i$. Hence
${}^C\mathcal{M}$ is a coreflective subcategory of ${\rm Mod}(C^*)$. 

\begin{coll} \label{c:com1} Let $C$ be a coalgebra over a field, and let $M$ and $N$ be left $C$-comodules. Then:
\begin{enumerate} 
\item $N$ is $M$-Rickart if and only if $N$ is $M$-Rickart as a right $C^*$-module. 
\item $N$ is dual $M$-Rickart if and only if $N$ is dual $M$-Rickart as a right $C^*$-module. 
\end{enumerate}
\end{coll}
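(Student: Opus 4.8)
The plan is to recognise this as a direct application of Corollary~\ref{c:rc}(2). First I would check that the hypotheses of that corollary are satisfied. The category ${}^C\mathcal{M}$ of left $C$-comodules over a coalgebra over a field is a Grothendieck category, hence an abelian full subcategory of ${\rm Mod}(C^*)$. By the discussion preceding the statement, the inclusion functor $i:{}^C\mathcal{M}\to {\rm Mod}(C^*)$ is fully faithful and admits the right adjoint ${\rm Rat}$, which exhibits ${}^C\mathcal{M}$ as a coreflective subcategory of ${\rm Mod}(C^*)$.

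With this setup in place, I would invoke Corollary~\ref{c:rc}(2) with $\mathcal{A}={\rm Mod}(C^*)$ and $\mathcal{C}={}^C\mathcal{M}$: for objects $M$ and $N$ of ${}^C\mathcal{M}$, the object $N$ is (dual) $M$-Rickart in ${}^C\mathcal{M}$ if and only if $i(N)$ is (dual) $i(M)$-Rickart in ${\rm Mod}(C^*)$. The remaining step is purely to unwind the identification of left $C$-comodules with rational right $C^*$-modules: under this identification $i$ sends each comodule to its underlying right $C^*$-module, so the statement that $i(N)$ is (dual) $i(M)$-Rickart in ${\rm Mod}(C^*)$ is exactly the statement that $N$ is (dual) $M$-Rickart as a right $C^*$-module. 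Reading off the two cases gives both (1) and (2) simultaneously.

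Since everything follows from the previously established corollary, I do not anticipate a genuine obstacle. The only points requiring a moment of care are confirming that ${}^C\mathcal{M}$ is abelian, which holds because it is Grothendieck, and verifying that the inclusion of comodules into modules is precisely the functor $i$ to which Corollary~\ref{c:rc} is applied. Notably, no separate dualisation argument is needed here, because the coreflective case of Corollary~\ref{c:rc} already delivers both the Rickart and the dual Rickart assertions at once.
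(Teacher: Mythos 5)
Your proof is correct, but it takes a different route from the paper. The paper's own proof does not invoke Corollary~\ref{c:rc} at all: it simply observes that $i:{}^C\mathcal{M}\to {\rm Mod}(C^*)$ is a fully faithful \emph{exact} functor and applies Theorem~\ref{t:ff} directly (left exactness for part (1), right exactness for part (2)). You instead route the argument through the coreflectivity of ${}^C\mathcal{M}$ in ${\rm Mod}(C^*)$ (via the right adjoint ${\rm Rat}$) and cite the packaged Corollary~\ref{c:rc}(2). Since Corollary~\ref{c:rc} is itself deduced from Theorem~\ref{t:ff}, the underlying mechanism is the same, and your hypotheses are all verified in the paper's setup paragraph, so the argument goes through. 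The trade-off is worth noting: the paper's route needs only the exactness of $i$ as input and never uses the adjoint ${\rm Rat}$ (its mention in the setup is strictly unnecessary for this corollary), whereas your route needs the existence of ${\rm Rat}$ but not any explicit exactness check --- the left exactness of $i$ needed for the non-dual Rickart statement is then hidden inside the paper's justification of Corollary~\ref{c:rc}(2) rather than made visible, which is precisely the fact (rational modules are closed under submodules, so kernels of comodule maps computed in ${\rm Mod}(C^*)$ are again comodules) that makes this example work. Your closing remark that no separate dualisation is needed is consistent with both routes, since $i$ is exact and Corollary~\ref{c:rc}(2) covers both assertions simultaneously.
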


\begin{proof} Note that $i:{}^C\mathcal{M}\to {\rm Mod}(C^*)$ is a fully faithful exact functor, and use Theorem \ref{t:ff}.
\end{proof}

Theorem \ref{t:ff} may be partially generalized from left (right) exact fully faithful functors to Maschke functors.
Recall that a functor $F:\mathcal{A}\to \mathcal{B}$ between abelian categories is called a \emph{Maschke functor} if
$F$ is left exact and for any object $M$ in $\mathcal{A}$ and any subobject $K$ of $M$ such that $F(K)$ is a direct
summand of $F(M)$, we have that $K$ is a direct summand of $M$. Dually, one defines the notion of \emph{dual Maschke
functor}. It is easy to check that a left (right) exact fully faithful functor is a Maschke (dual Maschke) functor. 

\begin{theo} \label{t:maschke} Let $F:\mathcal{A}\to \mathcal{B}$ be a functor between abelian categories, and
let $M$ and $N$ be objects of $\mathcal{A}$.
\begin{enumerate}
\item If $F$ is Maschke and $F(N)$ is $F(M)$-Rickart in $\mathcal{B}$, then $N$ is $M$-Rickart in $\mathcal{A}$.
\item If $F$ is dual Maschke and $F(N)$ is dual $F(M)$-Rickart in $\mathcal{B}$, then $N$ is dual $M$-Rickart in
$\mathcal{A}$.
\end{enumerate}
\end{theo}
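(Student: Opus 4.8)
The plan is to establish part~(1) directly and to obtain part~(2) by the duality principle in abelian categories, since the dual Maschke property and the dual relative Rickart property are the categorical duals of the Maschke property and the relative Rickart property, respectively. Note that, unlike in Theorem~\ref{t:ff}, only the stated implication is available: the Maschke condition is a device for reflecting direct summands, and without assuming $F$ full we cannot lift an arbitrary morphism $F(M)\to F(N)$ back to $\mathcal{A}$, so the preservation direction has no analogue here.

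For part~(1), I would take an arbitrary morphism $f:M\to N$ in $\mathcal{A}$ and write its kernel as $k={\rm ker}(f):K\to M$. The goal is to show that $k$ is a section, equivalently that $K$ is a direct summand of $M$. The strategy is to transport $f$ through $F$, use the Rickart hypothesis on $F(N)$ to split the kernel, and then pull the splitting back by means of the Maschke condition.

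Concretely, since $F$ is left exact it preserves kernels, so $F(k)={\rm ker}(F(f))$ is the kernel of $F(f):F(M)\to F(N)$ in $\mathcal{B}$. Because $F(N)$ is $F(M)$-Rickart, the kernel ${\rm ker}(F(f))=F(k)$ is a section; that is, $F(K)$ is a direct summand of $F(M)$ via $F(k)$. Now $K$ is a subobject of $M$ through $k$, and $F(K)$ is a direct summand of $F(M)$, so the Maschke hypothesis on $F$ applies directly and gives that $K$ is a direct summand of $M$. Hence $k={\rm ker}(f)$ is a section, which shows that $N$ is $M$-Rickart.

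This is essentially the reflection half of Theorem~\ref{t:ff}, with the Maschke condition playing the role that full faithfulness played there in reflecting sections. I do not anticipate a serious obstacle; the only point needing care is to confirm that the splitting supplied by the Rickart property of $F(N)$ is a splitting along the subobject $F(k)$, so that it is exactly the configuration for which the Maschke condition guarantees a splitting in $\mathcal{A}$. Left exactness of $F$, through the identification $F(k)={\rm ker}(F(f))$, is precisely what makes this matching automatic.
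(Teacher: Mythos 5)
Your proof is correct and follows exactly the paper's own argument: apply $F$ to $f$, use the Rickart hypothesis to split ${\rm Ker}(F(f))$, identify $F({\rm ker}(f))={\rm ker}(F(f))$ via the left exactness built into the Maschke definition, and then invoke the Maschke condition to conclude that ${\rm Ker}(f)$ is a direct summand of $M$, with part (2) following by duality. Your side remark that the preservation direction is unavailable without fullness is also accurate and consistent with the paper stating only the reflection implication.
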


\begin{proof} (1) Let $f:M\to N$ be a morphism in $\mathcal{A}$ and consider the morphism $F(f):F(M)\to F(N)$ in
$\mathcal{B}$. Since $F(N)$ is $F(M)$-Rickart, ${\rm Ker}(F(f))$ is a direct summand of $F(M)$. Since $F$ is a
Maschke functor, it is left exact, hence we have ${\rm ker}(F(f))=F({\rm ker}(f))$. Moreover, it follows that
${\rm Ker}(f)$ is a direct summand of $M$. Hence $N$ is $M$-Rickart. 
\end{proof}

Let $(L,R)$ be an adjoint pair of covariant functors $L:\mathcal{A}\to \mathcal{B}$ and
$R:\mathcal{B}\to \mathcal{A}$ between abelian categories such that $R$ is fully faithful. 
Also, let $M$ and $N$ be objects of $\mathcal{B}$. Since $R$ is left exact, $N$ is $M$-Rickart in $\mathcal{B}$ 
if and only if $R(N)$ is $R(M)$-Rickart in $\mathcal{A}$ by Theorem \ref{t:ff}.
Since $R$ is fully faithful, one has $LR\cong 1_{\mathcal{B}}$, and in particular, $LR(M)\cong M$ and $LR(N)\cong N$. 
A version of transfer of the relative Rickart property will still hold if the global hypothesis of 
a fully faithful functor $R$ is replaced by a local one, in the sense that we only ask 
the isomorphisms $LR(M)\cong M$ and $LR(N)\cong N$. But we need to add some extra conditions, 
using the following concepts. 

\begin{defn} \rm Let $M$ and $N$ be objects of an abelian category $\mathcal{A}$. Then: 
\begin{enumerate}
\item $N$ is called \emph{$M$-cyclic} if there exists an epimorphism $M\to N$.
\item $M$ is called \emph{$N$-cocyclic} if there exists a monomorphism $M\to N$.
\end{enumerate}
\end{defn}

Let $(L,R)$ be an adjoint pair of covariant functors $L:\mathcal{A}\to \mathcal{B}$ and $R:\mathcal{B}\to \mathcal{A}$
between abelian categories. Let $\varepsilon:LR\to 1_{\mathcal{B}}$ and $\eta:1_{\mathcal{A}}\to RL$ be the counit and
the unit of adjunction respectively. Recall that an object $B\in \mathcal{B}$ is called \emph{$R$-static} if
$\varepsilon_B$ is an isomorphism, while an object $A\in \mathcal{A}$ is called \emph{$R$-adstatic} if $\eta_A$ is an
isomorphism \cite{CGW}. Denote by ${\rm Stat}(R)$ the full subcategory of $\mathcal{B}$ consisting of $R$-static
objects, and by ${\rm Adst}(R)$ the full subcategory of $\mathcal{A}$ consisting of $R$-adstatic objects. 

\begin{theo} \label{t:equiv} Let $(L,R)$ be an adjoint pair of covariant functors $L:\mathcal{A}\to \mathcal{B}$ and
$R:\mathcal{B}\to \mathcal{A}$ between abelian categories. 
\begin{enumerate}
\item Let $M$ and $N$ be objects of $\mathcal{B}$ such that $M,N\in {\rm Stat}(R)$. Then the following are equivalent:
\begin{enumerate}[(i)] 
\item $N$ is $M$-Rickart in $\mathcal{B}$.
\item $R(N)$ is $R(M)$-Rickart in $\mathcal{A}$ and for every morphism $f:M\to N$, ${\rm Ker}(f)$ is $M$-cyclic.
\item $R(N)$ is $R(M)$-Rickart in $\mathcal{A}$ and for every morphism $f:M\to N$, ${\rm Ker}(f)\in {\rm Stat}(R)$.
\end{enumerate}
\item Let $M$ and $N$ be objects of $\mathcal{A}$ such that $M,N\in {\rm Adst}(R)$. Then the following are equivalent:
\begin{enumerate}[(i)] 
\item $N$ is dual $M$-Rickart in $\mathcal{A}$.
\item $L(N)$ is dual $L(M)$-Rickart in $\mathcal{B}$ and for every morphism $f:M\to N$, ${\rm Coker}(f)$ is
$N$-cocyclic.
\item $L(N)$ is dual $L(M)$-Rickart in $\mathcal{B}$ and for every morphism $f:M\to N$, ${\rm Coker}(f)\in {\rm
Adst}(R)$.
\end{enumerate}
\end{enumerate}
\end{theo}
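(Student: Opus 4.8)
The plan is to prove part (1) in full and to obtain part (2) by the duality principle in abelian categories, passing to the opposite categories $\mathcal{A}^{\mathrm{op}},\mathcal{B}^{\mathrm{op}}$, where the adjunction $L\dashv R$ becomes $R^{\mathrm{op}}\dashv L^{\mathrm{op}}$, dual $M$-Rickart becomes $M$-Rickart, the $N$-cocyclic condition becomes the cyclic condition, and $R$-adstatic becomes $R^{\mathrm{op}}$-static. Throughout part (1) I would rely on three preliminary facts. First, since $M,N\in {\rm Stat}(R)$, the right adjoint $R$ is fully faithful on static objects, so $f\mapsto R(f)$ is a bijection $\mathrm{Hom}_{\mathcal{B}}(M,N)\to \mathrm{Hom}_{\mathcal{A}}(R(M),R(N))$; this is the equivalence ${\rm Stat}(R)\simeq {\rm Adst}(R)$ of \cite{CGW}, but I would also record the direct check via the triangle identities, namely $f=\varepsilon_N\circ LR(f)\circ \varepsilon_M^{-1}$ and, for $g:R(M)\to R(N)$, the identity $R(\varepsilon_N\circ L(g)\circ \varepsilon_M^{-1})=g$. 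Second, $R$ is left exact as a right adjoint, so $R({\rm ker}(f))={\rm ker}(R(f))$. Third, ${\rm Stat}(R)$ is closed under direct summands, since $LR$ is additive and $\varepsilon$ is natural, whence $\varepsilon_{K\oplus K'}=\varepsilon_K\oplus \varepsilon_{K'}$.

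For (i)$\Rightarrow$(ii): given $g:R(M)\to R(N)$, write $g=R(f)$; as $N$ is $M$-Rickart, ${\rm ker}(f)$ is a section, so the additive functor $R$ carries it to the section ${\rm ker}(R(f))=R({\rm ker}(f))$, proving $R(N)$ is $R(M)$-Rickart. Moreover ${\rm Ker}(f)$ is a direct summand of $M$, so the projection $M\to {\rm Ker}(f)$ is an epimorphism, giving $M$-cyclicity; being a summand of the static object $M$, the same ${\rm Ker}(f)$ is static, so the implication (i)$\Rightarrow$(iii) is equally immediate.

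For (ii)$\Rightarrow$(iii): fix $f:M\to N$ and set $K={\rm Ker}(f)$, $k={\rm ker}(f)$. I would show $\varepsilon_K$ is an isomorphism. From $M$-cyclicity there is an epimorphism $p:M\to K$, and naturality of $\varepsilon$ gives $\varepsilon_K\circ LR(p)=p\circ \varepsilon_M$, whose right-hand side is an epimorphism (as $\varepsilon_M$ is an isomorphism and $p$ is epi), so $\varepsilon_K$ is an epimorphism. On the other hand, since $R(N)$ is $R(M)$-Rickart and $R(k)={\rm ker}(R(f))$, the morphism $R(k)$ is a section; as any functor preserves sections, $LR(k)$ is a split monomorphism, and naturality of $\varepsilon$ gives $k\circ \varepsilon_K=\varepsilon_M\circ LR(k)$, whose right-hand side is a monomorphism, so $\varepsilon_K$ is a monomorphism. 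In an abelian category a monomorphism which is also an epimorphism is an isomorphism, hence $K\in {\rm Stat}(R)$.

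For (iii)$\Rightarrow$(i): fix $f:M\to N$ and keep the notation above, now with $K\in {\rm Stat}(R)$. Since $R(N)$ is $R(M)$-Rickart, $R(k)$ is a section, say $s\circ R(k)=1_{R(K)}$ with $s:R(M)\to R(K)$. The candidate splitting of $k$ in $\mathcal{B}$ is $t=\varepsilon_K\circ L(s)\circ \varepsilon_M^{-1}:M\to K$; using $k\circ \varepsilon_K=\varepsilon_M\circ LR(k)$ together with $L(s)\circ LR(k)=L(s\circ R(k))=1_{LR(K)}$, a direct computation yields $t\circ k=1_K$, so $k$ is a section and $N$ is $M$-Rickart. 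The main obstacle is exactly this descent from $\mathcal{A}$ back to $\mathcal{B}$: a splitting of $R(k)$ in $\mathcal{A}$ need not arise from a splitting of $k$, and it is precisely the isomorphisms $\varepsilon_K$ and $\varepsilon_M$ (that is, the staticity of $K$ and of $M$) that allow the transfer to go through, which is why the cyclicity and staticity side conditions cannot be dropped.
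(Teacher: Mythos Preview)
Your proof is correct and follows essentially the same approach as the paper: both prove part (1) via the cycle (i)$\Rightarrow$(ii)$\Rightarrow$(iii)$\Rightarrow$(i) using left exactness of $R$ together with the naturality squares for $\varepsilon$, and obtain part (2) by duality. The only cosmetic differences are that you invoke the Hom-bijection on static objects directly to write $g=R(f)$ (the paper instead passes through the $\eta$-naturality square to identify $g$ with $RL(g)$), in (ii)$\Rightarrow$(iii) you conclude $\varepsilon_K$ is a monomorphism where the paper notes it is even a section, and in (iii)$\Rightarrow$(i) you write down the explicit retraction $t=\varepsilon_K\circ L(s)\circ\varepsilon_M^{-1}$ rather than reading off that $k=\varepsilon_M\circ LR(k)\circ\varepsilon_K^{-1}$ is a section.
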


\begin{proof} Let $\varepsilon:LR\to 1_{\mathcal{B}}$ and $\eta:1_{\mathcal{A}}\to RL$ be the counit and the unit of
adjunction respectively.

(1) (i)$\Rightarrow$(ii) Assume that $N$ is $M$-Rickart in $\mathcal{B}$. Let $g:R(M)\to R(N)$ be a morphism in
$\mathcal{A}$. By naturality we have the following commutative diagram in $\mathcal{A}$: 
\[\SelectTips{cm}{}
\xymatrix{
R(M) \ar[d]_{\eta_{R(M)}} \ar[r]^g & R(N) \ar[d]^{\eta_{R(N)}} \\ 
RLR(M) \ar[r]_{RL(g)} & RLR(N)  
} 
\]
Since $R(\varepsilon_M)\eta_{R(M)}=1_{R(M)}$ and $\varepsilon_M$ is an isomorphism, it follows that $\eta_{R(M)}$ is an
isomorphism. Similarly, $\eta_{R(N)}$ is an isomorphism. Then we have $RL(g)=g$. Viewing $L(g):M\to N$, ${\rm
ker}(L(g))$ is a section, because $N$ is $M$-Rickart. But $R$ is left exact, hence  ${\rm ker}(g)={\rm
ker}(RL(g))=R({\rm ker}(L(g)))$ is a section. Thus $R(N)$ is $R(M)$-Rickart in $\mathcal{A}$. Finally, for every
morphism $f:M\to N$ in $\mathcal{B}$, ${\rm Ker}(f)$ is a direct summand of $M$, hence ${\rm Ker}(f)$ is $M$-cyclic.

(ii)$\Rightarrow$(iii) Assume that $R(N)$ is $R(M)$-Rickart in $\mathcal{A}$ and for every morphism $f:M\to
N$, ${\rm Ker}(f)$ is $M$-cyclic. Let $f:M\to N$ be a morphism in $\mathcal{B}$ with kernel $k={\rm
ker}(f):K\to M$. Since $R(N)$ is $R(M)$-Rickart, ${\rm ker}(R(f))$ is a section. Since $R$ is left
exact, it follows that $R(k)$ is a section, and so $LR(k)$ is a section. Since $K$ is $M$-cyclic, there is an
epimorphism $p:M\to K$. By naturality we have the following two commutative diagrams in $\mathcal{B}$: 
\[\SelectTips{cm}{}
\xymatrix{
LR(K) \ar[d]_{\varepsilon_K} \ar[r]^{LR(k)} & LR(M) \ar[d]^{\varepsilon_M} \\ 
K \ar[r]_k & M  
} 
\hspace{2cm}
\xymatrix{
LR(M) \ar[d]_{\varepsilon_M} \ar[r]^{LR(p)} & LR(K) \ar[d]^{\varepsilon_K} \\ 
M \ar[r]_p & K  
} 
\]
Then $k\varepsilon_K=\varepsilon_M LR(k)$ is a section, hence $\varepsilon_K$ is a section. Also, $\varepsilon_K
LR(p)=p\varepsilon_M$ is an epimorphism, hence $\varepsilon_K$ is an epimorphism. Then $\varepsilon_K$ is
an isomorphism, which implies that ${\rm Ker}(f)\in {\rm Stat}(R)$. 

(iii)$\Rightarrow$(i) Assume that $R(N)$ is $R(M)$-Rickart in $\mathcal{A}$ and for every morphism $f:M\to
N$, ${\rm Ker}(f)\in {\rm Stat}(R)$. Let $f:M\to N$ be a morphism in $\mathcal{B}$ with kernel $k={\rm
ker}(f):K\to M$. Then $\varepsilon_K$ and $\varepsilon_M$ are isomorphisms, and $LR(k)$ is a section as above. Now
the above left hand side diagram implies that $k$ is a section. Hence $N$ is $M$-Rickart in $\mathcal{B}$.
\end{proof}

Theorem \ref{t:equiv} together with its version for the contravariant case yield our main application to endomorphism
rings of modules in Section 4. That is why we give the necessary associated concepts, and we state it for easy reference.

Let $(L,R)$ be a right adjoint pair of contravariant functors $L:\mathcal{A}\to
\mathcal{B}$ and $R:\mathcal{B}\to \mathcal{A}$ between abelian categories, in the sense of \cite[p.~81]{Freyd}. 
Let $\varepsilon:1_{\mathcal{B}}\to LR$ and $\eta:1_{\mathcal{A}}\to RL$ be the counit and the unit of adjunction respectively. 
Recall that an object $B\in \mathcal{B}$ is called \emph{$R$-reflexive} if $\varepsilon_B$ is an isomorphism, 
while an object $A\in \mathcal{A}$ is called \emph{$L$-reflexive} if $\eta_A$ is an isomorphism \cite{Castano}. 
Denote by ${\rm Refl}(R)$ the full subcategory of $\mathcal{B}$ consisting of $R$-reflexive objects, 
and by ${\rm Refl}(L)$ the full subcategory of $\mathcal{A}$ consisting of $L$-reflexive objects. 
One can consider similar notations for a left adjoint pair of contravariant functors.  

\begin{theo} \label{t:dual} Let $(L,R)$ be a pair of contravariant functors $L:\mathcal{A}\to \mathcal{B}$ and
$R:\mathcal{B}\to \mathcal{A}$ between abelian categories. 
\begin{enumerate}
\item Assume that $(L,R)$ is left adjoint. Let $M$ and $N$ be objects of $\mathcal{B}$ such that $M,N\in {\rm Refl}(R)$.
Then the following are equivalent:
\begin{enumerate}[(i)] 
\item $N$ is $M$-Rickart in $\mathcal{B}$.
\item $R(M)$ is dual $R(N)$-Rickart in $\mathcal{A}$ and for every morphism $f:M\to N$, ${\rm Ker}(f)$ is $M$-cyclic.
\item $R(M)$ is dual $R(N)$-Rickart in $\mathcal{A}$ and for every morphism $f:M\to N$, ${\rm Ker}(f)\in {\rm Refl}(R)$.
\end{enumerate}
\item Assume that $(L,R)$ is right adjoint. Let $M$ and $N$ be objects of $\mathcal{A}$ such that $M,N\in {\rm
Refl}(L)$. Then the following are equivalent:
\begin{enumerate}[(i)] 
\item $N$ is dual $M$-Rickart in $\mathcal{A}$.
\item $L(M)$ is $L(N)$-Rickart in $\mathcal{B}$ and for every morphism $f:M\to N$, ${\rm Coker}(f)$ is $N$-cocyclic.
\item $L(M)$ is $L(N)$-Rickart in $\mathcal{B}$ and for every morphism $f:M\to N$, ${\rm Coker}(f)\in {\rm Refl}(L)$.
\end{enumerate}
\end{enumerate}
\end{theo}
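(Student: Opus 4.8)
The plan is to deduce both parts of Theorem~\ref{t:dual} from Theorem~\ref{t:equiv} by reinterpreting the contravariant adjoint pair $(L,R)$ as a \emph{covariant} adjoint pair after replacing one of the two categories by its opposite. This is the natural way to realise ``the contravariant case'' promised in the text, and it lets all the bookkeeping be carried out through the duality $\mathcal{A}\leftrightarrow\mathcal{A}^{\mathrm{op}}$. I would treat part (1) in full and obtain part (2) by the same device (or, alternatively, by the abelian-category duality principle, under which (1) and (2) are mutually dual).

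For part (1), $(L,R)$ is a left adjoint pair of contravariant functors, so $\Hom_{\mathcal{B}}(L(A),B)\cong\Hom_{\mathcal{A}}(R(B),A)$ naturally. Regarding $L$ and $R$ as \emph{covariant} functors $L:\mathcal{A}^{\mathrm{op}}\to\mathcal{B}$ and $R:\mathcal{B}\to\mathcal{A}^{\mathrm{op}}$, this isomorphism becomes $\Hom_{\mathcal{B}}(L(A),B)\cong\Hom_{\mathcal{A}^{\mathrm{op}}}(A,R(B))$, i.e. $(L,R)$ is an ordinary covariant adjoint pair with $L$ left adjoint and $R$ right adjoint. Crucially, its counit $LR\to 1_{\mathcal{B}}$ is literally the map $\varepsilon$ of the contravariant pair, so the covariant class ${\rm Stat}(R)$ coincides with ${\rm Refl}(R)$; thus $M,N\in{\rm Refl}(R)$ is exactly the hypothesis needed to apply Theorem~\ref{t:equiv}(1), with the role of its category $\mathcal{A}$ now played by $\mathcal{A}^{\mathrm{op}}$. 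I would then read off the three equivalent conditions through the dictionary for $\mathcal{A}^{\mathrm{op}}$: kernels become cokernels and sections become retractions, so ``$R(N)$ is $R(M)$-Rickart in $\mathcal{A}^{\mathrm{op}}$'' translates into ``$R(M)$ is dual $R(N)$-Rickart in $\mathcal{A}$'', while ${\rm Stat}(R)$ in $\mathcal{A}^{\mathrm{op}}$ becomes ${\rm Refl}(R)$ in $\mathcal{A}$; the conditions living in the unchanged category $\mathcal{B}$ ($M$-Rickart, ${\rm Ker}(f)$ being $M$-cyclic) carry over verbatim.

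For part (2), $(L,R)$ is a right adjoint pair, so $\Hom_{\mathcal{A}}(A,R(B))\cong\Hom_{\mathcal{B}}(B,L(A))$. Here I would pass to the covariant adjoint pair $(\tilde L,\tilde R)=(R,L)$ with $\tilde L=R:\mathcal{B}\to\mathcal{A}^{\mathrm{op}}$ left adjoint and $\tilde R=L:\mathcal{A}^{\mathrm{op}}\to\mathcal{B}$ right adjoint, whose counit $\tilde L\tilde R=RL\to 1_{\mathcal{A}^{\mathrm{op}}}$ is the unit $\eta$ of the contravariant pair; hence ${\rm Stat}(\tilde R)={\rm Refl}(L)$ and the hypothesis $M,N\in{\rm Refl}(L)$ is again what Theorem~\ref{t:equiv}(1) requires. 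Applying that theorem to the objects $M,N$ in the swapped order $(M'=N,\ N'=M)$ of $\mathcal{A}^{\mathrm{op}}$, and translating back (``$N'$ is $M'$-Rickart in $\mathcal{A}^{\mathrm{op}}$'' $=$ ``$N$ is dual $M$-Rickart in $\mathcal{A}$'', and ``${\rm Ker}(f)$ is $M'$-cyclic in $\mathcal{A}^{\mathrm{op}}$'' $=$ ``${\rm Coker}(f)$ is $N$-cocyclic in $\mathcal{A}$''), yields precisely conditions (i)--(iii).

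The routine part is the verification of the adjunction isomorphisms and of the identity of the induced (co)units with $\varepsilon$ and $\eta$; the delicate part is the dictionary itself. The main obstacle I anticipate is exactly this bookkeeping of variance: choosing which category to dualize, deciding which of $L,R$ becomes the left adjoint of the induced covariant pair, and---especially in part (2)---tracking the interchange of $M$ and $N$ together with the four correspondences (Rickart $\leftrightarrow$ dual Rickart, section $\leftrightarrow$ retraction, $M$-cyclic $\leftrightarrow$ $N$-cocyclic, ${\rm Refl}\leftrightarrow{\rm Stat}$) so that the statement produced matches (i)--(iii) on the nose rather than a variant with swapped roles.
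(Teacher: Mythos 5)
Your proof is correct and is precisely the argument the paper intends: Theorem \ref{t:dual} is stated there without an explicit proof, as the ``contravariant version'' of Theorem \ref{t:equiv} to be obtained by dualization, which is exactly what you carry out by regarding the contravariant adjoint pair as a covariant adjoint pair involving $\mathcal{A}^{\mathrm{op}}$ and then translating back. Your bookkeeping is accurate throughout --- the identification of ${\rm Stat}$ with ${\rm Refl}$ via the induced (co)units, the dictionary (Rickart in $\mathcal{A}^{\mathrm{op}}$) $\leftrightarrow$ (dual Rickart in $\mathcal{A}$ with the two objects interchanged), cyclic $\leftrightarrow$ cocyclic, and in particular the swap $(M'=N,\ N'=M)$ needed in part (2).
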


Following \cite{C08}, for a right $R$-module $P$ we denote by ${\rm PAdd}(P)$ the class of right $R$-modules $Z$ for
which there is a pure epimorphism $P^{(I)}\to Z$ for some set $I$. 
Note that the full subcategory ${\rm PAdd}(P)$ of ${\rm Mod}(R)$ and the full subcategory of flat right ${\rm End}_R(P)$-modules
are not abelian in general \cite[Theorem~3]{GM}. 

\begin{coll} \label{c:PAdd} Let $P$ be a finitely presented right $R$-module, and let $S={\rm End}_R(P)$. 
\begin{enumerate}
\item Let $M,N\in {\rm PAdd}(P)$. Then the following are equivalent:
\begin{enumerate}[(i)] 
\item $N$ is $M$-Rickart.
\item ${\rm Hom}_R(P,N)$ is a ${\rm Hom}_R(P,M)$-Rickart right $S$-module and for every homomorphism $f:M\to N$, ${\rm
Ker}(f)$ is $M$-cyclic.
\item ${\rm Hom}_R(P,N)$ is a ${\rm Hom}_R(P,M)$-Rickart right $S$-module and for every homomorphism $f:M\to N$, ${\rm
Ker}(f)\in {\rm Stat}({\rm Hom}_R(P,-))$.
\end{enumerate}
\item Let $M'$ and $N'$ be flat right $S$-modules. Then the following are equivalent:
\begin{enumerate}[(i)] 
\item $N'$ is dual $M'$-Rickart.
\item $N'\otimes_SP$ is a dual $M'\otimes_SP$-Rickart right $R$-module and for every homomorphism $f:M'\to N'$, ${\rm
Coker}(f)$ is $N'$-cocyclic.
\item $N'\otimes_SP$ is a dual $M'\otimes_SP$-Rickart right $R$-module and for every homomorphism $f:M'\to N'$, ${\rm
Coker}(f)\in {\rm Adst}({\rm Hom}_R(P,-))$.
\end{enumerate}
\end{enumerate}
\end{coll}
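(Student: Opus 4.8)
The plan is to obtain both parts as instances of Theorem \ref{t:equiv} applied to the adjoint pair $(L,R)=(-\otimes_S P,\ \mathrm{Hom}_R(P,-))$ of covariant functors $L:\mathrm{Mod}(S)\to\mathrm{Mod}(R)$ and $R:\mathrm{Mod}(R)\to\mathrm{Mod}(S)$, where $P$ is regarded as an $(S,R)$-bimodule via $S=\mathrm{End}_R(P)$. The unit and counit are the canonical maps $\eta_{M'}:M'\to\mathrm{Hom}_R(P,M'\otimes_S P)$ and $\varepsilon_M:\mathrm{Hom}_R(P,M)\otimes_S P\to M$. Here part (1) of the corollary should follow from Theorem \ref{t:equiv}(1) (with $\mathcal{B}=\mathrm{Mod}(R)$, $\mathcal{A}=\mathrm{Mod}(S)$) and part (2) from Theorem \ref{t:equiv}(2), once the objects involved are placed in the right full subcategories: for part (1) that $\mathrm{PAdd}(P)\subseteq\mathrm{Stat}(R)$, and for part (2) that every flat right $S$-module lies in $\mathrm{Adst}(R)$. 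Granted these two memberships, the three equivalent conditions in each part of Theorem \ref{t:equiv} transcribe verbatim into those of the corollary, using $R(M)=\mathrm{Hom}_R(P,M)$, $L(M')=M'\otimes_S P$, and $\mathrm{Stat}(R)=\mathrm{Stat}(\mathrm{Hom}_R(P,-))$, $\mathrm{Adst}(R)=\mathrm{Adst}(\mathrm{Hom}_R(P,-))$.

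For the adstatic fact I would argue by direct limits. Since $P$ is finitely presented, $\mathrm{Hom}_R(P,-)$ commutes with direct limits, and $-\otimes_S P$ does so unconditionally; hence both $1_{\mathrm{Mod}(S)}$ and $RL=\mathrm{Hom}_R(P,-\otimes_S P)$ preserve direct limits, and so does the natural transformation $\eta$. On $S$ the map $\eta_S:S\to\mathrm{Hom}_R(P,P)=S$ is the identity, so $\eta$ is an isomorphism on every finitely generated free $S$-module. By the Lazard--Govorov theorem a flat right $S$-module $M'$ is a direct limit of finitely generated free modules, and taking the direct limit of the corresponding isomorphisms shows $\eta_{M'}$ is an isomorphism; thus $M'\in\mathrm{Adst}(R)$.

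For the static fact, the cleanest route is to use that $\mathrm{PAdd}(P)$ consists exactly of the direct limits of modules in $\mathrm{add}(P)$ (direct summands of finite direct sums of copies of $P$), available from \cite{C08}. Every module in $\mathrm{add}(P)$ is $R$-static: $\varepsilon_P:\mathrm{Hom}_R(P,P)\otimes_S P=P\to P$ is the identity, and since $LR$ is additive, staticity passes to finite direct sums and to direct summands. Then, exactly as above, the preservation of direct limits by $LR$ and by $\varepsilon$ (again using that $P$ is finitely presented) forces $\varepsilon_M$ to be an isomorphism for every $M\in\mathrm{PAdd}(P)$, so $M\in\mathrm{Stat}(R)$.

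I expect the static membership to be the main obstacle. The adstatic statement is a routine direct-limit argument, but controlling the counit on $\mathrm{PAdd}(P)$ is delicate. One can also attempt a direct approach from a pure epimorphism $\pi:P^{(I)}\to M$: applying $\mathrm{Hom}_R(P,-)$ yields a short exact sequence of $S$-modules precisely because $P$ is finitely presented and $\pi$ is pure, and tensoring back with $P$ produces a ladder of counit maps over the original pure-exact sequence. In that ladder surjectivity of $\varepsilon_M$ is immediate from commutativity, whereas injectivity is where the purity of $\pi$ is genuinely needed and where the argument is easiest to get wrong; this is why I would prefer to route the static fact through the direct-limit characterization of $\mathrm{PAdd}(P)$. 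Once both memberships are established, the conclusion is a direct application of Theorem \ref{t:equiv}.
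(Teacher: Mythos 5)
Your proposal is correct, and its skeleton coincides with the paper's: both proofs apply Theorem \ref{t:equiv} to the adjoint pair $(-\otimes_S P,\ \mathrm{Hom}_R(P,-))$, so everything reduces to the two memberships $\mathrm{PAdd}(P)\subseteq \mathrm{Stat}(\mathrm{Hom}_R(P,-))$ and $\{\text{flat right } S\text{-modules}\}\subseteq \mathrm{Adst}(\mathrm{Hom}_R(P,-))$. Where you diverge is in how these memberships are obtained. The paper disposes of both at once by citing \cite[Lemma~2.4]{GG}: for $P$ finitely presented, the pair $(T,H)$ induces an equivalence between the full subcategory $\mathrm{PAdd}(P)$ of $\mathrm{Mod}(R)$ and the full subcategory of flat right $S$-modules; since for an adjunction restricting to an equivalence the counit is invertible on one side and the unit on the other, this yields exactly the two facts needed. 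You instead reprove them from scratch: Lazard--Govorov plus preservation of direct limits (valid because $P$ is finitely presented, so $\mathrm{Hom}_R(P,-)$ commutes with direct limits) for the adstatic half, and the identification $\mathrm{PAdd}(P)=\varinjlim \mathrm{add}(P)$ plus the same direct-limit argument for the static half. Both arguments are sound; the only blemish is an attribution. The equality $\mathrm{PAdd}(P)=\varinjlim \mathrm{add}(P)$ for finitely presented $P$ is not available from \cite{C08}, which the paper invokes only for the \emph{definition} of $\mathrm{PAdd}(P)$ as the class of pure-epimorphic images of coproducts of copies of $P$; it is Lenzing's theorem (for a class of finitely presented modules closed under finite direct sums, its direct limits are exactly the pure-epimorphic images of direct sums of its members), and together with your direct-limit argument it essentially reconstitutes the lemma of G\'omez Pardo and Guil Asensio that the paper cites. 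So you have in effect reproved \cite[Lemma~2.4]{GG} rather than bypassed it. What your route buys is transparency: it isolates precisely what is needed (only the two memberships, not the full equivalence) and shows exactly where finite presentation of $P$ enters; what the paper's route buys is brevity.
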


\begin{proof} Consider the adjoint pair of covariant functors $(T,H)$, where $T=-\otimes_SP:{\rm Mod}(S)\to {\rm
Mod}(R)$ and $H={\rm Hom}_R(P,-):{\rm Mod}(R)\to {\rm Mod}(S)$. Then the pair $(T,H)$ induces an equivalence between
the full subcategory ${\rm PAdd}(P)$ of ${\rm Mod}(R)$ and the full subcategory of flat right $S$-modules by
\cite[Lemma~2.4]{GG}. Now the conclusion follows by Theorem \ref{t:equiv}.  
\end{proof}

\section{(Dual) relative Baer objects}

Let us recall the definitions of (dual) relative Baer objects and their connections to (dual) relative Rickart objects
in abelian categories.

\begin{defn} \cite[Definition~6.1]{CK} \rm Let $M$ and $N$ be objects of an abelian category $\mathcal{A}$. Then $N$ is
called: 
\begin{enumerate}
\item \emph{$M$-Baer} if for every family $(f_i)_{i\in I}$ with each $f_i\in \Hom_{\mathcal{A}}(M,N)$, $\bigcap_{i\in I}
{\rm Ker}(f_i)$ is a direct summand of $M$. 
\item \emph{dual $M$-Baer} if for every family $(f_i)_{i\in I}$ with each $f_i\in \Hom_{\mathcal{A}}(M,N)$, $\sum_{i\in
I} {\rm Im}(f_i)$ is a direct summand of $N$.  
\item \emph{self-Baer} if $N$ is $N$-Baer.
\item \emph{dual self-Baer} if $N$ is dual $N$-Baer.
\end{enumerate} 
\end{defn}

\begin{rem} \rm Our (dual) self-Baer objects are simply called (dual) Baer modules in module categories \cite{KT,RR04}.
\end{rem}

\begin{lemm} \cite[Lemma~6.2]{CK} \label{l:BR} Let $M$ and $N$ be objects of an abelian category $\mathcal{A}$.
\begin{enumerate}
\item Assume that there exists the product $N^I$ for every set $I$. Then $N$ is $M$-Baer if and only if $N^I$ is
$M$-Rickart for every set $I$.
\item Assume that there exists the coproduct $M^{(I)}$ for every set $I$. Then $N$ is dual $M$-Baer if and only
if $N$ is dual $M^{(I)}$-Rickart
for every set $I$. 
\end{enumerate}
\end{lemm}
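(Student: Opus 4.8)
The plan is to exploit the universal properties of the product $N^I$ and the coproduct $M^{(I)}$, translating a single morphism into a family of morphisms and back, so that kernels match intersections and images match sums. The whole lemma then reduces to two clean subobject identities.

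For part (1), I would first record the standard bijection $\Hom_{\mathcal{A}}(M,N^I)\cong\prod_{i\in I}\Hom_{\mathcal{A}}(M,N)$ given by $g\mapsto(\pi_ig)_{i\in I}$, where $\pi_i:N^I\to N$ are the projections. The key step is then to prove that for $g:M\to N^I$ with components $f_i=\pi_ig$ one has ${\rm Ker}(g)=\bigcap_{i\in I}{\rm Ker}(f_i)$ as subobjects of $M$. This follows purely from the universal property of the product: a morphism $h:X\to M$ satisfies $gh=0$ if and only if $\pi_igh=f_ih=0$ for all $i$, that is, if and only if $h$ factors through every ${\rm ker}(f_i)$, hence through their intersection; comparing with the universal property of ${\rm ker}(g)$ identifies ${\rm Ker}(g)$ with $\bigcap_{i\in I}{\rm Ker}(f_i)$. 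In particular, the hypothesis that $N^I$ exists is exactly what realizes this (possibly infinite) intersection as the kernel of a single morphism, so the intersection automatically exists. With this identity both implications are immediate: if every $N^I$ is $M$-Rickart, then any family $(f_i)_{i\in I}$ assembles into a morphism $g:M\to N^I$ whose kernel $\bigcap_{i\in I}{\rm Ker}(f_i)$ is a direct summand, so $N$ is $M$-Baer; conversely, if $N$ is $M$-Baer, then for any $g:M\to N^I$ the kernel ${\rm Ker}(g)=\bigcap_{i\in I}{\rm Ker}(\pi_ig)$ is a direct summand, so $N^I$ is $M$-Rickart.

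Part (2) is the categorical dual, and I would obtain it either by invoking the duality principle in abelian categories or, directly, by the dual argument. Using the bijection $\Hom_{\mathcal{A}}(M^{(I)},N)\cong\prod_{i\in I}\Hom_{\mathcal{A}}(M,N)$, $g\mapsto(g\iota_i)_{i\in I}$ with coproduct injections $\iota_i:M\to M^{(I)}$, one shows ${\rm Im}(g)=\sum_{i\in I}{\rm Im}(f_i)$ for $f_i=g\iota_i$: each $f_i$ factors through ${\rm Im}(g)$, giving $\sum_{i\in I}{\rm Im}(f_i)\subseteq{\rm Im}(g)$, while $g$ factors through the subobject $\sum_{i\in I}{\rm Im}(f_i)\hookrightarrow N$ because each component $f_i$ does and a morphism out of a coproduct is controlled by its components, giving the reverse inclusion. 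Here the existence of $M^{(I)}$ is what realizes $\sum_{i\in I}{\rm Im}(f_i)$ as the image of a single morphism. Translating the dual Baer and dual $M^{(I)}$-Rickart conditions through this identity yields the equivalence exactly as in part (1).

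The only delicate point is the identification of ${\rm Ker}(g)$ with $\bigcap_{i\in I}{\rm Ker}(f_i)$ (and dually of ${\rm Im}(g)$ with $\sum_{i\in I}{\rm Im}(f_i)$) from universal properties alone, valid for arbitrary, possibly infinite index sets; everything else is a routine translation between the two formulations. I expect no genuine obstacle beyond being careful that the existence hypotheses on $N^I$ and $M^{(I)}$ are precisely what guarantee that the relevant infinite intersection and sum exist and are expressible as a single kernel, respectively image.
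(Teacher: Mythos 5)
Your proof is correct. There is, however, no in-paper proof to compare it against: the paper states this lemma with the citation \cite[Lemma~6.2]{CK} and gives no argument, so the proof is outsourced to the earlier paper of Crivei and K\"or. Your route --- reducing both parts to the subobject identities ${\rm Ker}(g)=\bigcap_{i\in I}{\rm Ker}(\pi_i g)$ for $g:M\to N^I$ and ${\rm Im}(g)=\sum_{i\in I}{\rm Im}(g\iota_i)$ for $g:M^{(I)}\to N$ --- is exactly the identification the present paper itself invokes without comment in the proof of Lemma~\ref{l:kerim} (``Then ${\rm ker}(f)=\bigcap_{i\in I}{\rm ker}(f_i)$''), so your argument is the expected one and in fact supplies the justification the paper takes for granted. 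One presentational remark on part (2): stating ``$\sum_{i\in I}{\rm Im}(f_i)\subseteq {\rm Im}(g)$'' and then ``$g$ factors through $\sum_{i\in I}{\rm Im}(f_i)$'' presupposes that the join of the subobjects ${\rm Im}(f_i)$ already exists, which is part of what must be proved; the cleaner ordering is to show that ${\rm Im}(g)$ contains each ${\rm Im}(f_i)$ and that any subobject of $N$ containing all the ${\rm Im}(f_i)$ receives a factorization of $g$ (via the coproduct property) and hence contains ${\rm Im}(g)$, which simultaneously establishes that the join exists and equals ${\rm Im}(g)$. Since you explicitly note that the existence of $M^{(I)}$ is what realizes the sum as an image, this is a matter of exposition, not a gap.
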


\begin{coll} \label{c:ffbaer} Let $F:\mathcal{A}\to \mathcal{B}$ be a fully faithful functor between abelian categories,
and let $M$ and $N$ be objects of $\mathcal{A}$. 
\begin{enumerate}
\item Assume that there exists the product $N^I$ for every set $I$, $F$ is left exact and $F$ preserves products. Then
$N$ is $M$-Baer in $\mathcal{A}$ if and only if $F(N)$ is $F(M)$-Baer in $\mathcal{B}$.
\item Assume that there exists the coproduct $M^{(I)}$ for every set $I$, $F$ is right exact and $F$ preserves
coproducts. Then $N$ is dual $M$-Baer in $\mathcal{A}$ if and only if $F(N)$ is dual $F(M)$-Baer in $\mathcal{B}$.
\end{enumerate}
\end{coll}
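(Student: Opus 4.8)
The plan is to reduce the Baer property to the Rickart property by means of Lemma~\ref{l:BR}, and then to invoke Theorem~\ref{t:ff} together with the hypothesis that $F$ preserves products. I will treat part (1); part (2) follows by the duality principle in abelian categories, interchanging products with coproducts, left exactness with right exactness, kernels with cokernels, and sections with retractions.

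First I would fix a set $I$ and use that $F$ preserves products to obtain an isomorphism $F(N^I)\cong F(N)^I$ in $\mathcal{B}$; in particular the product $F(N)^I$ exists in $\mathcal{B}$ whenever $N^I$ exists in $\mathcal{A}$. Since the relative Rickart property is invariant under isomorphism of the target object, this shows that $F(N^I)$ is $F(M)$-Rickart in $\mathcal{B}$ if and only if $F(N)^I$ is $F(M)$-Rickart in $\mathcal{B}$.

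Next I would chain the following equivalences. By Lemma~\ref{l:BR}(1), using that $N^I$ exists for every set $I$, the object $N$ is $M$-Baer in $\mathcal{A}$ if and only if $N^I$ is $M$-Rickart in $\mathcal{A}$ for every set $I$. By Theorem~\ref{t:ff}(1), applied to the object $N^I$ and using that $F$ is left exact and fully faithful, $N^I$ is $M$-Rickart in $\mathcal{A}$ if and only if $F(N^I)$ is $F(M)$-Rickart in $\mathcal{B}$. Combining this with the isomorphism above, the latter holds for every set $I$ if and only if $F(N)^I$ is $F(M)$-Rickart in $\mathcal{B}$ for every set $I$. Finally, since all the products $F(N)^I$ exist in $\mathcal{B}$, Lemma~\ref{l:BR}(1) applied in $\mathcal{B}$ shows that this is equivalent to $F(N)$ being $F(M)$-Baer in $\mathcal{B}$, as desired.

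The only point requiring care, and the main (albeit mild) obstacle, is the bookkeeping about existence of products in $\mathcal{B}$: applying Lemma~\ref{l:BR}(1) in $\mathcal{B}$ presupposes that $F(N)^I$ exists for every set $I$, which is not assumed outright but is supplied precisely by the hypothesis that $F$ preserves products together with the existence of $N^I$ in $\mathcal{A}$. Once this is recorded, the argument reduces to concatenating Lemma~\ref{l:BR} and Theorem~\ref{t:ff}, with no further computation needed.
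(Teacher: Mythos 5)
Your proof is correct and follows essentially the same route as the paper, whose proof is precisely the concatenation of Lemma~\ref{l:BR} and Theorem~\ref{t:ff} that you describe. Your additional bookkeeping (that $F$ preserving products gives both the existence of $F(N)^I$ in $\mathcal{B}$ and the isomorphism $F(N^I)\cong F(N)^I$, and that the Rickart property is invariant under isomorphism of the target) is exactly the implicit content behind the paper's one-line proof.
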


\begin{proof} This follows by Theorem \ref{t:ff} and Lemma \ref{l:BR}.
\end{proof}

For Grothendieck categories we have the following corollary.

\begin{coll} Let $\mathcal{A}$ be a Grothendieck category with a generator $U$, $R={\rm End}_{\mathcal{A}}(U)$, $S={\rm
Hom}_{\mathcal{A}}(U,-):\mathcal{A}\to {\rm Mod}(R)$. Let $M$ and $N$ be objects of $\mathcal{A}$. Then $N$ is an
$M$-Baer object of $\mathcal{A}$ if and only if $S(N)$ is an $S(M)$-Baer right $R$-module.
\end{coll}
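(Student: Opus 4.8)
The plan is to reduce this statement about relative Baer objects to the already-established result on relative Rickart objects (Corollary \ref{c:gp}) by means of the characterization of Baer objects via Rickart objects from Lemma \ref{l:BR}. The functor $S={\rm Hom}_{\mathcal{A}}(U,-)$ is, by the Gabriel--Popescu Theorem, fully faithful and left exact (being a right adjoint), exactly the hypotheses available in Corollary \ref{c:ffbaer}(1). So the heart of the argument is to verify that $S$ satisfies the extra requirements of that corollary, namely that the relevant products exist and that $S$ preserves them.

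First I would note that $\mathcal{A}$ is a Grothendieck category, so it is complete; in particular the product $N^I$ exists for every set $I$, and likewise ${\rm Mod}(R)$ has all products. This disposes of the existence hypothesis. The key step is then to check that $S$ preserves products. Since $S$ is a right adjoint (its left adjoint being the functor $T$ from the Gabriel--Popescu Theorem), it automatically preserves all limits that exist in $\mathcal{A}$, and products are limits; hence $S(N^I)\cong S(N)^I$ canonically for every set $I$. With this in hand, all three hypotheses of Corollary \ref{c:ffbaer}(1) are met, and I would simply invoke it: $N$ is $M$-Baer in $\mathcal{A}$ if and only if $S(N)$ is $S(M)$-Baer in ${\rm Mod}(R)$, which is the desired conclusion.

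Alternatively, one can argue directly through the Rickart statement to make the logical chain fully transparent. Using Lemma \ref{l:BR}(1), $N$ is $M$-Baer in $\mathcal{A}$ if and only if $N^I$ is $M$-Rickart in $\mathcal{A}$ for every set $I$; by Corollary \ref{c:gp} this holds if and only if $S(N^I)$ is $S(M)$-Rickart in ${\rm Mod}(R)$ for every $I$; using $S(N^I)\cong S(N)^I$ this is equivalent to $S(N)^I$ being $S(M)$-Rickart for every $I$; and by Lemma \ref{l:BR}(1) applied in ${\rm Mod}(R)$ this is equivalent to $S(N)$ being $S(M)$-Baer.

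I expect no serious obstacle here: the only point requiring care is the isomorphism $S(N^I)\cong S(N)^I$, and this is immediate once one observes that $S$ is a right adjoint and therefore commutes with products. Everything else is a direct citation of Corollary \ref{c:ffbaer} (or of Corollary \ref{c:gp} together with Lemma \ref{l:BR}), so the proof is essentially a one-line deduction once the preservation-of-products observation is recorded.
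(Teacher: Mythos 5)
Your proof is correct and follows essentially the same route as the paper: invoke the Gabriel--Popescu Theorem to get that $S$ is fully faithful and left exact, note that $S$ preserves products as a right adjoint, and then apply Corollary~\ref{c:ffbaer}(1). Your additional remarks (existence of products in the Grothendieck category $\mathcal{A}$, and the unfolded argument via Lemma~\ref{l:BR} and Corollary~\ref{c:gp}) are just explicit versions of what the paper leaves implicit.
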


\begin{proof} By the Gabriel-Popescu Theorem \cite[Chapter~X, Theorem~4.1]{St}, $S$ is a fully faithful functor which
has an exact left adjoint $T:{\rm Mod}(R)\to \mathcal{A}$. Since $S$ is a right adjoint, it is left exact and preserves
products. Then use Corollary \ref{c:ffbaer}. 
\end{proof}

\begin{coll} \label{c:tripleffbaer} Let $(L,F,R)$ be an adjoint triple of covariant functors $F:\mathcal{A}\to
\mathcal{B}$ and $L,R:\mathcal{B}\to \mathcal{A}$ between abelian categories. 
\begin{enumerate}
\item Let $M$ and $N$ be objects of $\mathcal{A}$, and assume that $F$ is fully faithful. 
\begin{enumerate}[(i)] 
\item Assume that there exists the product $N^I$ for every set $I$. Then $N$ is $M$-Baer in $\mathcal{A}$ if and only if
$F(N)$ is $F(M)$-Baer in $\mathcal{B}$.
\item Assume that there exists the coproduct $M^{(I)}$ for every set $I$. Then $N$ is dual $M$-Baer in $\mathcal{A}$ if
and only if $F(N)$ is dual $F(M)$-Baer in $\mathcal{B}$.
\end{enumerate}
\item Let $M$ and $N$ be objects of $\mathcal{B}$, and assume that $L$ (or $R$) is fully faithful. 
\begin{enumerate}[(i)] 
\item Assume that there exists the product $N^I$ for every set $I$. Then $N$ is $M$-Baer in $\mathcal{B}$ if and only if
$R(N)$ is $R(M)$-Baer in $\mathcal{A}$.
\item Assume that there exists the coproduct $M^{(I)}$ for every set $I$. Then $N$ is dual $M$-Baer in $\mathcal{B}$ if
and only if $L(N)$ is dual $L(M)$-Baer in $\mathcal{A}$.
\end{enumerate}
\end{enumerate}
\end{coll}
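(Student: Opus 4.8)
The plan is to reduce every assertion to Corollary \ref{c:ffbaer}, the Baer counterpart of Theorem \ref{t:ff}, by identifying in each case a fully faithful functor carrying the exactness and (co)product-preservation properties demanded by that corollary. The key preliminary observation is purely formal and concerns only the adjointness: since $(L,F)$ and $(F,R)$ are adjoint pairs, $F$ is simultaneously a left adjoint (of the pair $(F,R)$) and a right adjoint (of the pair $(L,F)$). As a right adjoint it is left exact and preserves products, and as a left adjoint it is right exact and preserves coproducts; in particular $F$ is exact and preserves all products and all coproducts. Likewise $L$, being a left adjoint, is right exact and preserves coproducts, while $R$, being a right adjoint, is left exact and preserves products.

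For part (1), I would simply feed these properties into Corollary \ref{c:ffbaer}. For (i), the functor $F$ is fully faithful, left exact and preserves products, and the product $N^I$ exists in $\mathcal{A}$ for every set $I$, so Corollary \ref{c:ffbaer}(1) gives that $N$ is $M$-Baer in $\mathcal{A}$ if and only if $F(N)$ is $F(M)$-Baer in $\mathcal{B}$. For (ii), the functor $F$ is fully faithful, right exact and preserves coproducts, and $M^{(I)}$ exists in $\mathcal{A}$, so Corollary \ref{c:ffbaer}(2) yields the corresponding dual statement.

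For part (2) I would apply the same corollary, but now to the functors $R$ and $L$ viewed as functors $\mathcal{B}\to \mathcal{A}$, after recalling that $L$ is fully faithful if and only if $R$ is \cite[Lemma~1.3]{DT}, so that the hypothesis ``$L$ (or $R$) fully faithful'' makes both of them fully faithful. For (i), the functor $R:\mathcal{B}\to \mathcal{A}$ is fully faithful, left exact and preserves products, and $N^I$ exists in $\mathcal{B}$ for every set $I$; Corollary \ref{c:ffbaer}(1), with the roles of $\mathcal{A}$ and $\mathcal{B}$ interchanged, then gives that $N$ is $M$-Baer in $\mathcal{B}$ if and only if $R(N)$ is $R(M)$-Baer in $\mathcal{A}$. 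For (ii), the functor $L:\mathcal{B}\to \mathcal{A}$ is fully faithful, right exact and preserves coproducts, and $M^{(I)}$ exists in $\mathcal{B}$; Corollary \ref{c:ffbaer}(2) then gives that $N$ is dual $M$-Baer in $\mathcal{B}$ if and only if $L(N)$ is dual $L(M)$-Baer in $\mathcal{A}$.

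I expect no genuinely hard step here: the entire content is the bookkeeping of matching each assertion with the correct adjoint, and hence with the correct exactness and (co)product-preservation behaviour, and then invoking Corollary \ref{c:ffbaer}. The only points requiring a moment's care are remembering that $F$ inherits \emph{both} directions of exactness and \emph{both} kinds of (co)product preservation precisely because it is a two-sided adjoint, and that the equivalence of full faithfulness of $L$ and $R$ is exactly what licenses the disjunction ``$L$ (or $R$)'' in the hypothesis of part (2).
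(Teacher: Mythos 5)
Your proposal is correct and is essentially the paper's own argument: the paper deduces the statement from the Rickart version for adjoint triples (Corollary \ref{c:tripleff}) combined with Lemma \ref{l:BR} and the preservation facts ($F$ preserves products and coproducts as a two-sided adjoint, $R$ preserves products, $L$ preserves coproducts), while you invoke Corollary \ref{c:ffbaer}, which packages precisely those same ingredients (Theorem \ref{t:ff} plus Lemma \ref{l:BR}). The only difference is this trivial refactoring of which intermediate corollary carries the reduction; your appeal to \cite[Lemma~1.3]{DT} for the equivalence of full faithfulness of $L$ and $R$ matches the paper's remark preceding Corollary \ref{c:tripleff}.
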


\begin{proof} This follows by Corollary \ref{c:tripleff}, Lemma \ref{l:BR} and the facts that $F$ preserves products
and coproducts as a left and right adjoint, $R$ preserves products, and $L$ preserves coproducts. 
\end{proof}

\begin{coll} Let $\varphi:R\to S$ be a ring epimorphism, and let $M$ and $N$ be right $S$-modules. Then $N$ is a (dual)
$M$-Baer right $S$-module if and only if $N$ is a (dual) $M$-Baer right $R$-module.
\end{coll}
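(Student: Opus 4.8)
The plan is to mirror exactly the proof of the Rickart version for ring epimorphisms, replacing the Rickart transfer result Corollary \ref{c:tripleff} with its Baer analogue Corollary \ref{c:tripleffbaer}. The key structural input is the adjoint triple $(\varphi^*,\varphi_*,\varphi^!)$ of functors associated with $\varphi:R\to S$, in which the middle functor is the restriction of scalars $\varphi_*:{\rm Mod}(S)\to {\rm Mod}(R)$. Setting $\mathcal{A}={\rm Mod}(S)$ and $\mathcal{B}={\rm Mod}(R)$, this puts us in the situation of Corollary \ref{c:tripleffbaer}(1) with $F=\varphi_*$, so that $M$ and $N$ are objects of $\mathcal{A}$ exactly as required.

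First I would record that, since $\varphi$ is a ring epimorphism, the restriction of scalars functor $\varphi_*$ is fully faithful by \cite[Chapter~XI, Proposition~1.2]{St}; this is precisely the hypothesis on $F$ required by Corollary \ref{c:tripleffbaer}(1). Next I would observe that the auxiliary existence hypotheses in that corollary---the existence of the product $N^I$ and of the coproduct $M^{(I)}$ for every set $I$---are automatically satisfied here, because $\mathcal{A}={\rm Mod}(S)$ is a module category and hence has arbitrary products and coproducts. Finally, since $\varphi_*$ acts as the identity on underlying abelian groups, $\varphi_*(M)$ and $\varphi_*(N)$ are just $M$ and $N$ regarded as right $R$-modules via $\varphi$.

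With these three points in place, Corollary \ref{c:tripleffbaer}(1)(i) immediately gives that $N$ is $M$-Baer in ${\rm Mod}(S)$ if and only if $N$ is $M$-Baer in ${\rm Mod}(R)$, and Corollary \ref{c:tripleffbaer}(1)(ii) gives the corresponding statement for dual $M$-Baer objects. There is no real obstacle: the entire content is the verification that the hypotheses of Corollary \ref{c:tripleffbaer} hold, and the only nontrivial ingredient---the full faithfulness of $\varphi_*$---is quoted from \cite{St} exactly as in the Rickart case. If anything merits a moment's care, it is simply confirming that $\varphi_*$ is indeed the middle term $F$ of the adjoint triple (and not one of the outer adjoints $\varphi^*,\varphi^!$), so that part (1) rather than part (2) of Corollary \ref{c:tripleffbaer} is the one that applies.
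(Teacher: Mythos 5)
Your proposal is correct and coincides with the paper's own proof: the paper likewise cites \cite[Chapter~XI, Proposition~1.2]{St} for the full faithfulness of $\varphi_*$ and then applies Corollary \ref{c:tripleffbaer} to the adjoint triple $(\varphi^*,\varphi_*,\varphi^!)$ with $F=\varphi_*$. Your additional remarks (existence of products and coproducts in ${\rm Mod}(S)$, and the identification of $\varphi_*$ as the middle functor so that part (1) applies) merely make explicit what the paper leaves implicit.
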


\begin{proof} Since $\varphi:R\to S$ is a ring epimorphism, the restriction of scalars functor $\varphi_*:{\rm
Mod}(S)\to {\rm Mod}(R)$ is fully faithful \cite[Chapter~XI, Proposition~1.2]{St}. Then use Corollary
\ref{c:tripleffbaer} for the adjoint triple of functors $(\varphi^*,\varphi_*,\varphi^!)$.
\end{proof}

\begin{coll} \label{c:rcbaer} Let $\mathcal{A}$ be an abelian category, $\mathcal{C}$ an abelian full subcategory of
$\mathcal{A}$ and $i:\mathcal{C}\to \mathcal{A}$ the inclusion functor. Let $M$ and $N$ be objects of $\mathcal{C}$. 
\begin{enumerate}
\item Assume that $\mathcal{C}$ is a reflective subcategory of $\mathcal{A}$, 
and there exists the product $N^I$ for every set $I$. Then $N$ is $M$-Baer in
$\mathcal{C}$ if and only if $i(N)$ is $i(M)$-Baer in $\mathcal{A}$.
\item Assume that $\mathcal{C}$ is a coreflective subcategory of $\mathcal{A}$, 
and there exists the coproduct $M^{(I)}$ for every set $I$. Then $N$ is (dual)
$M$-Baer in $\mathcal{C}$ if and only if $i(N)$ is (dual) $i(M)$-Baer in $\mathcal{A}$.
\end{enumerate}
\end{coll}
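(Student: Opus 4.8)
The plan is to deduce both statements from the fully faithful Baer-transfer result, Corollary~\ref{c:ffbaer}, applied to the inclusion functor $i$ in the role of the functor $F$ there. Everything reduces to extracting from (co)reflectivity precisely the exactness and (co)continuity that Corollary~\ref{c:ffbaer} requires of $F$; so the first step is to read off the categorical behaviour of $i$, and the second is a direct citation.

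First I would note that, being the inclusion of a full subcategory, $i$ is fully faithful. If $\mathcal{C}$ is reflective, then by definition $i$ has a left adjoint, hence $i$ is itself a right adjoint and is therefore left exact and preserves all limits, in particular products. Dually, if $\mathcal{C}$ is coreflective, then $i$ has a right adjoint, hence $i$ is a left adjoint and is therefore right exact and preserves all colimits, in particular coproducts. For part~(1) I would then invoke Corollary~\ref{c:ffbaer}(1) with $F=i$: the standing hypothesis supplies the product $N^I$ in $\mathcal{C}$ for every set $I$, while $i$ is fully faithful, left exact and product-preserving, so $N$ is $M$-Baer in $\mathcal{C}$ if and only if $i(N)$ is $i(M)$-Baer in $\mathcal{A}$. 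For part~(2) I would symmetrically invoke Corollary~\ref{c:ffbaer}(2) with $F=i$: the coproduct $M^{(I)}$ exists in $\mathcal{C}$ for every set $I$, while $i$ is fully faithful, right exact and coproduct-preserving, so $N$ is dual $M$-Baer in $\mathcal{C}$ if and only if $i(N)$ is dual $i(M)$-Baer in $\mathcal{A}$.

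The step I expect to require the most care is the matching of the data, which is exactly what dictates the asymmetric shape of the two parts. By Lemma~\ref{l:BR}, the $M$-Baer property is detected through the products $N^I$ together with left exactness, and these are precisely what the reflective side provides, whereas the dual $M$-Baer property is detected through the coproducts $M^{(I)}$ together with right exactness, which is what the coreflective side provides. Concretely, the point one must not overlook is that the (co)products formed in $\mathcal{C}$ have to be carried by $i$ to the corresponding (co)products in $\mathcal{A}$, so that the relative Rickart conditions on $N^I$ (respectively $M^{(I)}$) can be transported along $i$; this is secured by the product- (respectively coproduct-) preservation of $i$ recorded above. Accordingly, part~(2) is to be understood as the dual $M$-Baer assertion, which is the one compatible with the coproduct hypothesis $M^{(I)}$, and it dualizes part~(1) under the usual reflective/coreflective and product/coproduct duality.
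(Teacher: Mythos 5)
Your part (1) and the dual half of part (2) are correct and follow exactly the paper's route: read off fully faithfulness, exactness and (co)product preservation of $i$ from (co)reflectivity, then quote Corollary \ref{c:ffbaer}. The genuine gap is your treatment of the undualized half of part (2). In this paper the parenthetical ``(dual)'' always means that the assertion is claimed both with and without the word ``dual'' (compare Corollary \ref{c:tripleff}(1) and Corollary \ref{c:rc}(2)); so part (2) asserts, in particular, that $N$ is $M$-Baer in $\mathcal{C}$ if and only if $i(N)$ is $i(M)$-Baer in $\mathcal{A}$ when $\mathcal{C}$ is coreflective. You do not prove this: you prove only the dual statement and then declare that part (2) ``is to be understood as the dual $M$-Baer assertion''. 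That is a reinterpretation of the statement, not a proof of it.

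The ingredient you are missing is the one the paper's proof explicitly records for case (2): there the inclusion is claimed to be fully faithful \emph{exact}, not merely right exact. Right exactness and coproduct preservation are indeed all that adjointness by itself gives, as you say; the additional left exactness is not a formal consequence of $i$ being a left adjoint, but rests on the standing hypothesis that $\mathcal{C}$ is an \emph{abelian} full subcategory (so that kernels computed in $\mathcal{C}$ agree with kernels computed in $\mathcal{A}$). This is the same point that underlies Corollary \ref{c:rc}(2), where both the Rickart and the dual Rickart equivalences are asserted for a coreflective subcategory. To repair your argument you would need to establish this exactness of $i$ and then run the undualized Baer transfer as well (via Theorem \ref{t:ff}(1) and Lemma \ref{l:BR}, or Corollary \ref{c:ffbaer}(1)), taking care that the intersections of kernels occurring in the Baer condition are matched up between $\mathcal{C}$ and $\mathcal{A}$. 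It is fair to note that the paper's own one-line deduction of this half from Corollary \ref{c:ffbaer} is itself terse on that last point, since Corollary \ref{c:ffbaer}(1) nominally asks for products; but the difference is that the paper claims exactness of $i$ and asserts both halves, whereas you obtain only right exactness and explicitly discard one half of the stated equivalence.
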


\begin{proof} Note that $i$ is fully faithful left exact and preserves products in (1), 
and $i$ is fully faithful exact and preserves coproducts in (2). Then the conclusion follows by Corollary \ref{c:ffbaer}.
\end{proof}

For comodule categories we have the following corollary.

\begin{coll} \label{c:com3} Let $C$ be a coalgebra over a field, and let $M$ and $N$ be left $C$-comodules. Then:
\begin{enumerate} 
\item $N$ is $M$-Baer if and only if $N$ is $M$-Baer as a right $C^*$-module. 
\item $N$ is dual $M$-Baer if and only if $N$ is dual $M$-Baer as a right $C^*$-module. 
\end{enumerate}
\end{coll}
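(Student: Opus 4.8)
The plan is to reduce the Baer conditions to the Rickart conditions settled in Corollary \ref{c:com1}, via the passage between Baer and Rickart objects recorded in Lemma \ref{l:BR}; this is the same strategy that underlies Corollary \ref{c:rcbaer} for coreflective subcategories. First I would recall from the paragraph preceding Corollary \ref{c:com1} that $i:{}^C\mathcal{M}\to {\rm Mod}(C^*)$ is fully faithful and exact with right adjoint ${\rm Rat}$, so that ${}^C\mathcal{M}$ is a coreflective subcategory of ${\rm Mod}(C^*)$; being a Grothendieck category, ${}^C\mathcal{M}$ has all products and coproducts.

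For part (2) I expect no difficulty. Since $i$ is a left adjoint it preserves coproducts, so $i(M^{(I)})\cong i(M)^{(I)}$ for every set $I$. By Lemma \ref{l:BR}(2), $N$ is dual $M$-Baer in ${}^C\mathcal{M}$ if and only if $N$ is dual $M^{(I)}$-Rickart for every $I$; Corollary \ref{c:com1}(2) turns this into $i(N)$ being dual $i(M)^{(I)}$-Rickart for every $I$, which by Lemma \ref{l:BR}(2) in ${\rm Mod}(C^*)$ says precisely that $i(N)$ is dual $i(M)$-Baer. Equivalently, one may simply invoke Corollary \ref{c:rcbaer}(2).

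The hard part will be part (1), and the obstacle is that $i$ does not preserve products: Lemma \ref{l:BR}(1) involves the product $N^I$, but the product of copies of $N$ in ${}^C\mathcal{M}$ is only the rational part ${\rm Rat}(i(N)^I)$ of the $C^*$-module product $i(N)^I$, so Corollary \ref{c:ffbaer}(1) cannot be quoted verbatim. To circumvent this I would exploit that $M$ is a comodule, that is, a rational $C^*$-module, together with the fact that homomorphic images of rational modules are again rational. Then every $C^*$-homomorphism $i(M)\to i(N)^I$ factors through the inclusion ${\rm Rat}(i(N)^I)=i(N^I)\hookrightarrow i(N)^I$, and since this inclusion is a monomorphism the factored morphism has the same kernel; hence $i(N)^I$ is $i(M)$-Rickart if and only if $i(N^I)$ is $i(M)$-Rickart.

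Chaining these observations finishes the argument: by Lemma \ref{l:BR}(1) in ${}^C\mathcal{M}$, $N$ is $M$-Baer if and only if $N^I$ is $M$-Rickart for all $I$; by Corollary \ref{c:com1}(1) this holds if and only if $i(N^I)$ is $i(M)$-Rickart for all $I$; by the rationality remark this is equivalent to $i(N)^I$ being $i(M)$-Rickart for all $I$; and by Lemma \ref{l:BR}(1) in ${\rm Mod}(C^*)$ this means exactly that $i(N)$ is $i(M)$-Baer, as required.
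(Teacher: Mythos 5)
Your proof is correct, and it diverges from the paper's own argument precisely where that argument is too terse. The paper disposes of both parts in one line: $i$ is fully faithful and exact, apply Corollary \ref{c:ffbaer}. For part (2) this is exactly what you do (as a left adjoint, $i$ preserves coproducts, so Corollary \ref{c:ffbaer}(2), equivalently Corollary \ref{c:rcbaer}(2), applies), so there you and the paper coincide. For part (1), however, Corollary \ref{c:ffbaer}(1) also demands that the functor preserve products, a hypothesis the paper never verifies and which genuinely fails for infinite-dimensional $C$: the product in ${}^C\mathcal{M}$ is ${\rm Rat}$ applied to the $C^*$-module product, and, for instance, for the divided power coalgebra $C=k[x]$ an infinite product of copies of $C$ in ${\rm Mod}(C^*)$ is not rational. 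You identified exactly this obstacle, and your repair is sound: since homomorphic images of rational modules are rational, every $C^*$-homomorphism $i(M)\to i(N)^I$ factors through the monomorphism $i(N^I)={\rm Rat}(i(N)^I)\hookrightarrow i(N)^I$ without changing kernels, so $i(N)^I$ is $i(M)$-Rickart if and only if $i(N^I)$ is, and the chain through Lemma \ref{l:BR} and Corollary \ref{c:com1} closes. Thus your route buys a complete proof where the paper's citation leaves a gap. It is worth noting an even shorter repair: the Baer property is defined via families of morphisms and intersections of kernels, with no products involved; since $i$ is fully faithful and exact, and every $C^*$-submodule of a rational module is rational, the relevant morphism sets, the subobjects $\bigcap_{i\in I}{\rm Ker}(f_i)$, and the property of being a direct summand are literally the same whether computed in ${}^C\mathcal{M}$ or in ${\rm Mod}(C^*)$, which gives part (1) directly and avoids Lemma \ref{l:BR} altogether.
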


\begin{proof} Note that $i:{}^C\mathcal{M}\to {\rm Mod}(C^*)$ is a fully faithful exact functor, and use Corollary \ref{c:ffbaer}.
\end{proof}

\begin{coll} \label{c:maschkebaer} Let $F:\mathcal{A}\to \mathcal{B}$ be a Maschke functor between abelian categories,
and let $M$ and $N$ be objects of $\mathcal{A}$.
\begin{enumerate}
\item Assume that there exists the product $N^I$ for every set $I$, and $F$ preserves products. If
$F(N)$ is $F(M)$-Baer in $\mathcal{B}$, then $N$ is $M$-Baer in $\mathcal{A}$.
\item Assume that there exists the coproduct $M^{(I)}$ for every set $I$, and $F$ preserves coproducts. If $F(N)$ is
dual $F(M)$-Baer in $\mathcal{B}$, then $N$ is dual $M$-Baer in $\mathcal{A}$.
\end{enumerate}
\end{coll}

\begin{proof} This follows by Theorem \ref{t:maschke} and Lemma \ref{l:BR}.
\end{proof}

The following easy general lemma will be useful.

\begin{lemm} \label{l:kerim} Let $M$ and $N$ be objects of an abelian category $\mathcal{A}$, and let $\mathcal{P}$ be
a property for objects of $\mathcal{A}$.
\begin{enumerate}
\item Assume that there exists the product $N^I$ for every set $I$. Then the following are equivalent:
\begin{enumerate}[(i)]
\item For every morphism $f:M\to N^I$ for some set $I$, ${\rm Ker}(f)$ has $\mathcal{P}$.
\item For every family $(f_i)_{i\in I}$ with each $f_i\in {\rm Hom}_{\mathcal{A}}(M,N)$, $\bigcap_{i\in I}{\rm
Ker}(f_i)$ has $\mathcal{P}$.
\end{enumerate}
\item Assume that there exists the coproduct $M^{(I)}$ for every set $I$. Then the following are equivalent:
\begin{enumerate}[(i)]
\item For every morphism $f:M^{(I)}\to N$ for some set $I$, ${\rm Coker}(f)$ has $\mathcal{P}$.
\item For every family $(f_i)_{i\in I}$ with each $f_i\in {\rm Hom}_{\mathcal{A}}(M,N)$, $\sum_{i\in I} {\rm Im}(f_i)$
has $\mathcal{P}$.
\end{enumerate}
\end{enumerate}
\end{lemm}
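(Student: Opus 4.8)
The plan is to prove part (1) directly and to deduce part (2) by the duality principle, as elsewhere in the paper. The entire content is the observation that, once the product $N^I$ exists, a morphism $f\colon M\to N^I$ and a family $(f_i)_{i\in I}$ with each $f_i\in\Hom_{\mathcal A}(M,N)$ are the same datum, and that under this correspondence ${\rm Ker}(f)$ is carried to $\bigcap_{i\in I}{\rm Ker}(f_i)$.

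First I would fix the projections $\pi_i\colon N^I\to N$ and invoke the universal property of the product: the assignment $f\mapsto(\pi_i f)_{i\in I}$ is a bijection from $\Hom_{\mathcal A}(M,N^I)$ onto the families $(f_i)_{i\in I}$ of morphisms $M\to N$, with inverse sending a family to its unique induced morphism. Next I would establish the identity ${\rm Ker}(f)=\bigcap_{i\in I}{\rm Ker}(f_i)$, where $f$ and $(f_i)$ correspond as above. Arguing with test morphisms rather than elements: for any $u\colon X\to M$ one has $fu=0$ if and only if $\pi_i f u=f_i u=0$ for every $i$, that is, if and only if $u$ factors through ${\rm ker}(f_i)$ for every $i$; hence ${\rm ker}(f)$ and the monomorphism representing $\bigcap_{i\in I}{\rm Ker}(f_i)$ satisfy the same universal property and so coincide as subobjects of $M$. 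This simultaneously shows that the (possibly infinite) intersection exists, realized as ${\rm Ker}(f)$, which is exactly where the hypothesis that $N^I$ exists for every set $I$ enters.

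With the identity in hand the equivalence is immediate: as $I$ and $f\colon M\to N^I$ range over all choices, the subobjects ${\rm Ker}(f)$ of $M$ are exactly the subobjects $\bigcap_{i\in I}{\rm Ker}(f_i)$ arising from families, so conditions (i) and (ii) test the property $\mathcal P$ on precisely the same collection of objects and are therefore equivalent. Finally I would obtain part (2) by applying part (1) in the opposite category $\mathcal A^{\mathrm{op}}$: there the product becomes the coproduct $M^{(I)}$, kernels become cokernels, and the intersection of the kernels becomes the cokernel of the induced morphism $f\colon M^{(I)}\to N$, namely ${\rm Coker}(f)=N/\sum_{i\in I}{\rm Im}(f_i)$; the dual of part (1) is then exactly the assertion of part (2).

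The only real obstacle, and it is a mild one, is the care required to handle an arbitrary (possibly infinite) family of subobjects in a general abelian category without appealing to elements: the identity ${\rm Ker}(f)=\bigcap_i{\rm Ker}(f_i)$ must be derived purely from the universal properties of the kernel, the product and the intersection, and one must note that the existence of $N^I$ is precisely what allows the infinite intersection to be realized as a single kernel.
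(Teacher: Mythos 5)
Your proof of part (1) is correct and is essentially the paper's own argument: the bijection $f\mapsto(\pi_i f)_{i\in I}$ coming from the universal property of the product, together with the identity ${\rm Ker}(f)=\bigcap_{i\in I}{\rm Ker}(f_i)$, shows that (i) and (ii) test $\mathcal{P}$ on exactly the same subobjects of $M$. You in fact do slightly more than the paper, which asserts the kernel identity without proof; your test-morphism verification, including the observation that the existence of $N^I$ is what guarantees that the (possibly infinite) intersection exists, is a routine but genuine improvement in rigor.

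Part (2) is where there is a real gap, and it sits in your last sentence. Dualizing part (1) in $\mathcal{A}^{\rm op}$, the intersection of the subobjects ${\rm Ker}(f_i)$ of $M$ becomes the \emph{cointersection} of the quotient objects ${\rm Coker}(f_i)$ of $N$, namely the quotient $N/\sum_{i\in I}{\rm Im}(f_i)\cong {\rm Coker}(f)$ --- precisely the formula you display. So what your duality argument proves is: for every $f:M^{(I)}\to N$, ${\rm Coker}(f)$ has $\mathcal{P}$ if and only if for every family $(f_i)_{i\in I}$, the quotient $N/\sum_{i\in I}{\rm Im}(f_i)$ has $\mathcal{P}$. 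That is \emph{not} the printed statement (2)(ii), which applies $\mathcal{P}$ to the subobject $\sum_{i\in I}{\rm Im}(f_i)$ itself, and the two versions are genuinely inequivalent: the classes $\{{\rm Coker}(f)\}$ and $\{\sum_{i\in I}{\rm Im}(f_i)\}$ need not coincide even up to isomorphism. For instance, in the category of abelian groups take $M=N=\mathbb{Z}$ and let $\mathcal{P}$ be ``projective'': every sum of images is of the form $n\mathbb{Z}$, hence has $\mathcal{P}$, yet the cokernel $\mathbb{Z}/2\mathbb{Z}$ of multiplication by $2$ does not, so (2)(ii) holds while (2)(i) fails. Thus the assertion ``the dual of part (1) is then exactly the assertion of part (2)'' is where your proof breaks down; what duality yields is the corrected statement with $N/\sum_{i\in I}{\rm Im}(f_i)$ in (ii), or equivalently the statement with ${\rm Im}(f)$ in place of ${\rm Coker}(f)$ in (i), in which case no duality is needed at all since ${\rm Im}(f)=\sum_{i\in I}{\rm Im}(f_i)$. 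To be fair, this defect is inherited from the paper: its own statement of (2) has the same mismatch, and it offers no proof of (2) whatsoever (its proof treats only part (1), exactly as you do). But a careful solution must either prove the corrected statement and flag the discrepancy, or observe that (2) as printed is false.
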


\begin{proof} (1) Let $I$ be a set. For every $i\in I$, denote by $p_i:N^I\to N$ the canonical projection.

(i)$\Rightarrow$(ii) Assume that (i) holds. Let $(f_i)_{i\in I}$ be a family with each $f_i\in
\Hom_{\mathcal{A}}(M,N)$. By the universal property of the product, there exists a morphism $f:M\to N^I$ such that
$p_if=f_i$ for every $i\in I$. Then ${\rm ker}(f)=\bigcap_{i\in I} {\rm ker}(f_i)$ has $\mathcal{P}$ by hypothesis. 

(ii)$\Rightarrow$(i) Assume that (ii) holds. Let $f:M\to N^I$ be a morphism in $\mathcal{A}$ for some set $I$. For every
$i\in I$, denote $f_i=p_if:M\to N$. Then $\bigcap_{i\in I} {\rm ker}(f_i)={\rm ker}(f)$ has $\mathcal{P}$ by
hypothesis. 
\end{proof}

\begin{coll} \label{c:equivbaer} Let $(L,R)$ be an adjoint pair of covariant functors $L:\mathcal{A}\to \mathcal{B}$ and
$R:\mathcal{B}\to \mathcal{A}$ between abelian categories. 
\begin{enumerate}
\item Let $M$ and $N$ be objects of $\mathcal{B}$ such that $M,N\in {\rm Stat}(R)$ and for every set $I$ there exists
the product $N^I$. Then the following are equivalent:
\begin{enumerate}[(i)]
\item $N$ is $M$-Baer in $\mathcal{B}$.
\item $R(N)$ is $R(M)$-Baer in $\mathcal{A}$ and for every family $(f_i)_{i\in I}$ with each $f_i\in {\rm
Hom}_{\mathcal{A}}(M,N)$, $\bigcap_{i\in I}{\rm Ker}(f_i)$ is $M$-cyclic.
\item $R(N)$ is $R(M)$-Baer in $\mathcal{A}$ and for every family $(f_i)_{i\in I}$ with each $f_i\in {\rm
Hom}_{\mathcal{A}}(M,N)$, $\bigcap_{i\in I}{\rm Ker}(f_i)\in {\rm Stat}(R)$.
\end{enumerate}
\item Let $M$ and $N$ be objects of $\mathcal{A}$ such that $M,N\in {\rm Adst}(R)$ and for every set $I$ there exists
the coproduct $M^{(I)}$. Then the following are equivalent:
\begin{enumerate}[(i)]
\item $N$ is dual $M$-Baer in $\mathcal{A}$.
\item $L(N)$ is dual $L(M)$-Baer in $\mathcal{B}$ and for every family $(f_i)_{i\in I}$ with each $f_i\in {\rm
Hom}_{\mathcal{A}}(M,N)$, $\sum_{i\in I} {\rm Im}(f_i)$ is $N$-cocyclic.
\item $L(N)$ is dual $L(M)$-Baer in $\mathcal{B}$ and for every family $(f_i)_{i\in I}$ with each $f_i\in {\rm
Hom}_{\mathcal{A}}(M,N)$, $\sum_{i\in I} {\rm Im}(f_i)\in {\rm Adst}(R)$.
\end{enumerate}
\end{enumerate}
\end{coll}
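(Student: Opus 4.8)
The plan is to prove part (1) and to obtain part (2) by the duality principle, exactly as Theorem \ref{t:equiv}(2) dualizes Theorem \ref{t:equiv}(1): passing to opposite categories turns the adjoint pair $(L,R)$ into an adjoint pair in the other direction and interchanges $M$-Rickart with dual $M$-Rickart, $M$-Baer with dual $M$-Baer, ${\rm Stat}(R)$ with ${\rm Adst}(R)$, products with coproducts, and $M$-cyclic with $N$-cocyclic.

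For part (1) I would first reduce the Baer statement to the Rickart statement of Theorem \ref{t:equiv} via Lemmas \ref{l:BR} and \ref{l:kerim}. By Lemma \ref{l:BR}(1) in $\mathcal{B}$, condition (i) says ``$N^I$ is $M$-Rickart for every set $I$''. As $R$ is a right adjoint it preserves products, so $R(N^I)\cong R(N)^I$, and Lemma \ref{l:BR}(1) in $\mathcal{A}$ turns ``$R(N)$ is $R(M)$-Baer'' into ``$R(N^I)$ is $R(M)$-Rickart for every set $I$''. Finally Lemma \ref{l:kerim}(1), with $\mathcal{P}$ taken to be ``$M$-cyclic'' and then ``lies in ${\rm Stat}(R)$'', rewrites the kernel conditions in (ii) and (iii) as conditions on ${\rm Ker}(f)$ for morphisms $f:M\to N^I$. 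After these reformulations, (i), (ii), (iii) become, uniformly in $I$, exactly the three conditions of Theorem \ref{t:equiv}(1) for the pair $(M,N^I)$.

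The main obstacle is that Theorem \ref{t:equiv}(1) cannot be invoked verbatim for $(M,N^I)$: its hypothesis demands $N^I\in{\rm Stat}(R)$, and since the left adjoint $L$ need not preserve products, $\varepsilon_{N^I}$ need not be an isomorphism even when $\varepsilon_N$ is. The point is that the proof of that theorem uses far less than staticity of the target, so I would re-run its three implications and check that only $M,N\in{\rm Stat}(R)$ (and, for the backward implications, staticity of the relevant intersection of kernels) is ever used. For (i)$\Rightarrow$(ii), the $M$-cyclicity of $\bigcap_i{\rm Ker}(f_i)$ is immediate from its being a direct summand of $M$; for ``$R(N)$ is $R(M)$-Baer'', given a family $g_i:R(M)\to R(N)$ I set $h_i=\varepsilon_N L(g_i)\varepsilon_M^{-1}:M\to N$ and use the triangle identities together with $\varepsilon_M,\varepsilon_N$ being isomorphisms (hence $\eta_{R(M)},\eta_{R(N)}$ isomorphisms) to verify $R(h_i)=g_i$, so that $\bigcap_i{\rm Ker}(g_i)=R\big(\bigcap_i{\rm Ker}(h_i)\big)$ is a direct summand of $R(M)$ since $N$ is $M$-Baer and $R$ is additive, left exact and product-preserving. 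This invokes staticity of $M$ and $N$ only.

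For the two backward implications the proof of Theorem \ref{t:equiv}(1) applies without change to a morphism $f:M\to N^I$ with kernel $k:K\to M$, as it never uses staticity of the target. In (ii)$\Rightarrow$(iii), left exactness of $R$ and ``$R(N^I)$ is $R(M)$-Rickart'' make $R(k)$, hence $LR(k)$, a section; the naturality squares of $\varepsilon$ at $k$ and at an epimorphism $p:M\to K$ (present because $K$ is $M$-cyclic) then show $\varepsilon_K$ is both a section and an epimorphism, so $K\in{\rm Stat}(R)$, using $\varepsilon_M$ an isomorphism. In (iii)$\Rightarrow$(i), $R(k)$ and hence $LR(k)$ is again a section, while $\varepsilon_K$ and $\varepsilon_M$ are isomorphisms, so the naturality square of $\varepsilon$ at $k$ forces $k$ to be a section; thus $N^I$ is $M$-Rickart for every $I$ and $N$ is $M$-Baer. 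Reassembling the three implications and translating back through Lemmas \ref{l:BR} and \ref{l:kerim} gives the equivalence of (i), (ii), (iii).
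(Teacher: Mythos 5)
Your proposal is correct, and it uses the same ingredients as the paper's proof, which is literally the one sentence ``This follows by Theorem \ref{t:equiv}, Lemmas \ref{l:BR} and \ref{l:kerim} and the facts that $R$ preserves products and $L$ preserves coproducts.'' The substantive difference is that you identified what this citation glosses over: after the reductions via Lemmas \ref{l:BR} and \ref{l:kerim}, one wants to quote Theorem \ref{t:equiv}(1) for the pair $(M,N^I)$, and that is not licensed, because ${\rm Stat}(R)$ need not be closed under products --- $L$ need not preserve them, so $\varepsilon_{N^I}$ can fail to be an isomorphism even when $\varepsilon_N$ is one. Your repair is exactly what is needed and it works: the implications (ii)$\Rightarrow$(iii) and (iii)$\Rightarrow$(i) in the proof of Theorem \ref{t:equiv}(1) use only $\varepsilon_M$ (and the staticity of the kernel in the last step), never staticity of the target, so they run verbatim for a morphism $f:M\to N^I$ once one knows $R(N^I)\cong R(N)^I$ is $R(M)$-Rickart; and your substitute for (i)$\Rightarrow$(ii), transporting a family $g_i:R(M)\to R(N)$ to $h_i=\varepsilon_N L(g_i)\varepsilon_M^{-1}$ and checking $R(h_i)=g_i$ from naturality of $\eta$ and the triangle identities, then using that $R$ preserves kernels, products and sections, invokes only $M,N\in{\rm Stat}(R)$. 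In effect you prove, and need, a sharpened local form of Theorem \ref{t:equiv}(1); the paper's proof buys brevity but leaves precisely this verification to the reader, while yours makes the corollary genuinely follow. One further remark: your appeal to duality for part (2) matches the paper's standing practice and is fine as such, but note that the printed conditions in (2)(ii) and (2)(iii) involve $\sum_{i\in I}{\rm Im}(f_i)$, whereas the strict dual of part (1) produces the cokernel of the induced morphism $M^{(I)}\to N$ (a quotient of $N$, for which $N$-cocyclicity is a nontrivial condition, unlike for the subobject $\sum_{i\in I}{\rm Im}(f_i)$); this discrepancy is inherited from the statement of Lemma \ref{l:kerim}(2) and affects the paper's own argument and yours equally.
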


\begin{proof} This follows by Theorem \ref{t:equiv}, Lemmas \ref{l:BR} and \ref{l:kerim} and the facts that $R$
preserves products and $L$ preserves coproducts.
\end{proof}

\begin{coll} Let $(L,R)$ be a pair of contravariant functors $L:\mathcal{A}\to \mathcal{B}$ and
$R:\mathcal{B}\to \mathcal{A}$ between abelian categories. 
\begin{enumerate}
\item Assume that $(L,R)$ is left adjoint. Let $M$ and $N$ be objects of $\mathcal{B}$ such that $M,N\in {\rm Refl}(R)$
and for every set $I$ there exists the product $N^I$. Then the following are equivalent:
\begin{enumerate}[(i)]
\item $N$ is $M$-Baer in $\mathcal{B}$.
\item $R(M)$ is dual $R(N)$-Baer in $\mathcal{A}$ and for every set $I$ and for every family $(f_i)_{i\in
I}$ with each $f_i\in {\rm Hom}_{\mathcal{A}}(M,N)$, $\bigcap_{i\in I}{\rm Ker}(f_i)$ is $M$-cyclic.
\item $R(M)$ is dual $R(N)$-Baer in $\mathcal{A}$ and for every set $I$ and for every family $(f_i)_{i\in
I}$ with each $f_i\in {\rm Hom}_{\mathcal{A}}(M,N)$, $\bigcap_{i\in I}{\rm Ker}(f_i)\in {\rm Refl}(R)$.
\end{enumerate}
\item Assume that $(L,R)$ is right adjoint. Let $M$ and $N$ be objects of $\mathcal{A}$ such that $M,N\in {\rm Refl}(L)$
and for every set $I$ there exists the coproduct $M^{(I)}$. Then the following are equivalent:
\begin{enumerate}[(i)]
\item $N$ is dual $M$-Baer in $\mathcal{A}$.
\item $L(M)$ is $L(N)$-Baer in $\mathcal{B}$ and for every set $I$ and for every family $(f_i)_{i\in I}$ with each
$f_i\in {\rm Hom}_{\mathcal{A}}(M,N)$, $\sum_{i\in I} {\rm Im}(f_i)$ is $N$-cocyclic.
\item $L(M)$ is $L(N)$-Baer in $\mathcal{B}$ and for every set $I$ and for every family $(f_i)_{i\in I}$ with each
$f_i\in {\rm Hom}_{\mathcal{A}}(M,N)$, $\sum_{i\in I} {\rm Im}(f_i)\in {\rm Refl}(L)$.
\end{enumerate}
\end{enumerate}
\end{coll}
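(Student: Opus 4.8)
The plan is to treat this corollary as the Baer-level counterpart of Theorem \ref{t:dual}, exactly as Corollary \ref{c:equivbaer} is the Baer-level counterpart of Theorem \ref{t:equiv}. The two parts are categorical duals of one another, so I would prove part (1) in full and obtain part (2) by the duality principle in abelian categories. The engine is the reduction of the relative Baer property to the relative Rickart property via Lemma \ref{l:BR}, followed by an application of the contravariant transfer result Theorem \ref{t:dual}, with the family-wise cyclic and reflexive side conditions assembled by Lemma \ref{l:kerim}.

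For part (1), where $(L,R)$ is left adjoint with counit $\varepsilon\colon 1_{\mathcal{B}}\to LR$, I would first record the single preservation fact that drives everything: a contravariant functor belonging to a left adjoint pair carries products to coproducts, so that $R(N^I)\cong R(N)^{(I)}$ for every set $I$ (this follows from the defining isomorphism $\hom{\mathcal{B}}{L(A)}{B}\cong\hom{\mathcal{A}}{R(B)}{A}$ by a Yoneda computation). Then I would chain the equivalences as follows. By Lemma \ref{l:BR}(1), $N$ is $M$-Baer in $\mathcal{B}$ if and only if $N^I$ is $M$-Rickart for every $I$; applying Theorem \ref{t:dual}(1) to the pair $(M,N^I)$ rewrites this, for each $I$, as the conjunction of ``$R(M)$ is dual $R(N^I)$-Rickart'' and ``every $f\colon M\to N^I$ has $M$-cyclic (respectively $R$-reflexive) kernel''; substituting $R(N^I)\cong R(N)^{(I)}$ and invoking Lemma \ref{l:BR}(2) turns the collection of dual $R(N)^{(I)}$-Rickart conditions into the single statement that $R(M)$ is dual $R(N)$-Baer. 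Finally, the kernel side conditions, ranging over all $I$ and all $f\colon M\to N^I$, are repackaged by Lemma \ref{l:kerim}(1), with $\mathcal{P}$ taken to be ``$M$-cyclic'' (respectively ``lies in ${\rm Refl}(R)$''), into the conditions on $\bigcap_{i\in I}{\rm Ker}(f_i)$ appearing in (ii) and (iii), yielding the three-way equivalence.

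The step I expect to be the main obstacle is the application of Theorem \ref{t:dual}(1) to the pair $(M,N^I)$, since that theorem is stated for reflexive objects and $N^I$ need not lie in ${\rm Refl}(R)$: a contravariant left adjoint sends products to coproducts but has no reason to send the coproduct $R(N)^{(I)}$ back to $N^I$ under $L$, so $\varepsilon_{N^I}$ need not be invertible. As in the covariant Corollary \ref{c:equivbaer}, the way around this is not to quote Theorem \ref{t:dual} as a black box for $N^I$, but to re-run its diagram chase at the level of a single morphism $f\colon M\to N^I$. Inspection of that argument shows that it only ever uses the invertibility of $\varepsilon_M$ and of $\varepsilon_N$ (the latter through the conversion of maps $R(N)\to R(M)$ into maps $M\to N$), together with the dual Rickart and Baer hypotheses on $R(M)$ and the identity ${\rm coker}(R(f))=R({\rm ker}(f))$ coming from the left exactness of $R$ as a functor $\mathcal{B}\to\mathcal{A}^{\mathrm{op}}$; the reflexivity of $\bigcap_{i\in I}{\rm Ker}(f_i)$ is produced as an output in the implication to (iii) rather than assumed. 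Thus $N^I$ itself is never required to be reflexive, and the hypotheses $M,N\in{\rm Refl}(R)$ suffice. The family formulation remains compatible because $R$, being left exact into $\mathcal{A}^{\mathrm{op}}$, preserves the relevant kernels and intersections.

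Part (2) I would handle by the formal dual. Using Lemma \ref{l:BR}(2) I would reduce ``$N$ is dual $M$-Baer'' to ``$N$ is dual $M^{(I)}$-Rickart for every $I$''; since $(L,R)$ is now right adjoint, $L$ carries coproducts to products, giving $L(M^{(I)})\cong L(M)^{I}$, so Theorem \ref{t:dual}(2) applied to $(M^{(I)},N)$ together with Lemma \ref{l:BR}(1) converts the resulting conditions into ``$L(M)$ is $L(N)$-Baer'', while Lemma \ref{l:kerim}(2) assembles the cokernel side conditions into the statements on $\sum_{i\in I}{\rm Im}(f_i)$, with the same reflexivity subtlety for $M^{(I)}$ resolved exactly as above. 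Alternatively, and more economically, part (2) follows from part (1) by passing to the opposite categories, which interchanges left and right adjoint contravariant pairs, kernels and cokernels, and the Rickart and dual Rickart (hence Baer and dual Baer) properties.
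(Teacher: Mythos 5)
Your proposal is correct and takes essentially the same route as the paper, whose entire proof is the one-line citation ``This follows by Theorem \ref{t:dual}, Lemmas \ref{l:BR} and \ref{l:kerim} and the fact that $L$ converts coproducts into products'' --- the paper proves part (2) directly and leaves (1) to duality, while you prove (1) directly via $R(N^I)\cong R(N)^{(I)}$ and obtain (2) by duality, an immaterial difference. Your further observation that $N^I$ need not lie in ${\rm Refl}(R)$, so that Theorem \ref{t:dual} cannot be applied verbatim to the pair $(M,N^I)$ but must have its diagram chase re-run at the level of families (with the reflexivity of $\bigcap_{i\in I}{\rm Ker}(f_i)$ appearing as an output rather than an input), identifies and correctly repairs a subtlety that the paper's terse proof silently elides.
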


\begin{proof} (2) This follows by Theorem \ref{t:dual}, Lemmas \ref{l:BR} and \ref{l:kerim} and the fact that $L$ converts
coproducts into products.
\end{proof}

\begin{coll} Let $P$ be a finitely presented right $R$-module, and let $S={\rm End}_R(P)$. 
\begin{enumerate}
\item Let $M$ and $N$ be right $R$-modules such that $M,N\in {\rm PAdd}(P)$. The following are equivalent:
\begin{enumerate}[(i)] 
\item $N$ is $M$-Baer.
\item ${\rm Hom}_R(P,N)$ is a ${\rm Hom}_R(P,M)$-Baer right $S$-module for every family $(f_i)_{i\in I}$ with
each $f_i\in {\rm Hom}_{\mathcal{A}}(M,N)$, $\bigcap_{i\in I}{\rm Ker}(f_i)$ is $M$-cyclic.
\item ${\rm Hom}_R(P,N)$ is a ${\rm Hom}_R(P,M)$-Baer right $S$-module and for every family $(f_i)_{i\in I}$ with
each $f_i\in {\rm Hom}_{\mathcal{A}}(M,N)$, $\bigcap_{i\in I}{\rm Ker}(f_i)\in {\rm Stat}({\rm Hom}_R(P,-))$.
\end{enumerate}
\item Let $M'$ and $N'$ be right $S$-modules such that $M',N'$ are flat. The following are equivalent:
\begin{enumerate}[(i)] 
\item $N'$ is dual $M'$-Baer.
\item $N'\otimes_SP$ is a dual $M'\otimes_SP$-Baer right $R$-module and for every family $(f_i)_{i\in I}$ with each
$f_i\in {\rm Hom}_{\mathcal{A}}(M',N')$, $\sum_{i\in I} {\rm Im}(f_i)$ is $N'$-cocyclic.
\item $N'\otimes_SP$ is a dual $M'\otimes_SP$-Baer right $R$-module and for every family $(f_i)_{i\in I}$ with each
$f_i\in {\rm Hom}_{\mathcal{A}}(M',N')$, $\sum_{i\in I} {\rm Im}(f_i)\in {\rm Adst}({\rm Hom}_R(P,-))$.
\end{enumerate}
\end{enumerate}
\end{coll}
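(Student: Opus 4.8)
The plan is to follow the proof of Corollary~\ref{c:PAdd} line by line, simply replacing the appeal to Theorem~\ref{t:equiv} by its Baer counterpart Corollary~\ref{c:equivbaer}. Concretely, I would reinstate the adjoint pair of covariant functors $(T,H)$, where $T=-\otimes_S P:{\rm Mod}(S)\to {\rm Mod}(R)$ is left adjoint to $H={\rm Hom}_R(P,-):{\rm Mod}(R)\to {\rm Mod}(S)$. In the notation of Corollary~\ref{c:equivbaer} this amounts to setting $L=T$, $R=H$, $\mathcal{A}={\rm Mod}(S)$ and $\mathcal{B}={\rm Mod}(R)$. Since $P$ is finitely presented, \cite[Lemma~2.4]{GG} furnishes an equivalence induced by $(T,H)$ between the full subcategory ${\rm PAdd}(P)$ of ${\rm Mod}(R)$ and the full subcategory of flat right $S$-modules.

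For part~(1) I would note that $M,N\in {\rm PAdd}(P)$ are then $H$-static, that is, $M,N\in {\rm Stat}(H)$, and that products $N^I$ exist for every set $I$ in the module category ${\rm Mod}(R)$. Hence the hypotheses of Corollary~\ref{c:equivbaer}(1) are satisfied, and its three equivalent conditions translate directly into those of part~(1) after rewriting $H(N)={\rm Hom}_R(P,N)$, $H(M)={\rm Hom}_R(P,M)$ and ${\rm Stat}(H)={\rm Stat}({\rm Hom}_R(P,-))$. Dually, for part~(2) the flatness of $M',N'$ makes them $H$-adstatic, so $M',N'\in {\rm Adst}(H)$, and coproducts $(M')^{(I)}$ exist in ${\rm Mod}(S)$; Corollary~\ref{c:equivbaer}(2) then applies and its conclusions become those of part~(2) after rewriting $T(N')=N'\otimes_S P$, $T(M')=M'\otimes_S P$ and ${\rm Adst}(H)={\rm Adst}({\rm Hom}_R(P,-))$.

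The only step that deserves to be spelled out --- and the one I would treat as the crux --- is the identification of the static and adstatic objects with the two classes of modules in play. A right $R$-module $B$ belongs to ${\rm Stat}(H)$ precisely when the counit ${\rm Hom}_R(P,B)\otimes_S P\to B$ is an isomorphism, and a right $S$-module $A$ belongs to ${\rm Adst}(H)$ precisely when the unit $A\to {\rm Hom}_R(P,A\otimes_S P)$ is an isomorphism. What \cite[Lemma~2.4]{GG} supplies is exactly that these two isomorphism conditions single out ${\rm PAdd}(P)$ and the flat right $S$-modules, respectively; this is what lets the global (ad)static hypotheses of Corollary~\ref{c:equivbaer} be read off from the local hypotheses $M,N\in {\rm PAdd}(P)$ and $M',N'$ flat. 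With this identification recorded, both parts are immediate instances of Corollary~\ref{c:equivbaer} and require no further computation.
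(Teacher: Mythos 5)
Your proposal is correct and matches the paper's own argument, which is literally ``See the proof of Corollary~\ref{c:PAdd}, and use Corollary~\ref{c:equivbaer}'': you set up the same adjoint pair $(T,H)$ with $T=-\otimes_SP$ and $H={\rm Hom}_R(P,-)$, invoke the same equivalence from \cite[Lemma~2.4]{GG} between ${\rm PAdd}(P)$ and the flat right $S$-modules, and then apply Corollary~\ref{c:equivbaer} in place of Theorem~\ref{t:equiv}. Your explicit identification of ${\rm Stat}(H)$ with ${\rm PAdd}(P)$ and ${\rm Adst}(H)$ with the flat right $S$-modules via the counit and unit is exactly the step the paper leaves implicit, so nothing is missing.
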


\begin{proof} See the proof of Corollary \ref{c:PAdd}, and use Corollary \ref{c:equivbaer}.
\end{proof}

\section{Endomorphism rings}

In this section we discuss the transfer of (dual) relative Rickart and (dual) relative Baer properties to endomorphism
rings of (graded) modules.

\subsection{Modules} Let us recall some terminology and notation in module categories. We need the concepts of locally
split monomorphism and locally split epimorphism due to Azumaya. Recall that a monomorphism $f:A\to B$ of right
$R$-modules is called \emph{locally split} if for every $a\in A$, there exists an $R$-homomorphism $h:B\to A$ such that
$h(f(a))=a$, while an epimorphism $g:B\to C$ of right $R$-modules is called \emph{locally split} if for every $c\in C$,
there exists an $R$-homomorphism $h:C\to B$ such that $g(h(c))=c$ \cite[p.~132]{Azumaya}. 

Recall that a right $R$-module $M$ is called \emph{quasi-retractable} if for every family $(f_i)_{i\in I}$ with each
$f_i\in {\rm End}_R(M)$ and $\bigcap_{i\in I} {\rm Ker}(f_i)\neq 0$, ${\rm Hom}_R(M,\bigcap_{i\in I} {\rm Ker}(f_i))\neq
0$ \cite[Definition~2.3]{RR09}. Dually, a right $R$-module is called \emph{quasi-coretractable} if for every family
$(f_i)_{i\in I}$ with each $f_i\in {\rm End}_R(M)$ and $\sum_{i\in I} {\rm Im}(f_i)\neq M$, ${\rm Hom}_R(M/\sum_{i\in I}
{\rm Im}(f_i),M)\neq 0$ \cite[Definition~3.2]{KST}. They are useful when relating (dual) self-Baer properties of a
module and of its endomorphism ring. We introduce the following notions which will serve us for obtaining corresponding
results on (dual) self-Rickart modules.

\begin{defn} \rm A right $R$-module $M$ is called:
\begin{enumerate}
\item \emph{$k$-quasi-retractable} if ${\rm Hom}_R(M,{\rm Ker}(f))\neq 0$ for every $f\in {\rm End}_R(M)$
with ${\rm Ker}(f)\neq 0$.
\item \emph{$c$-quasi-coretractable} if ${\rm Hom}_R({\rm Coker}(f),M)\neq 0$ for every $f\in {\rm End}_R(M)$ with ${\rm
Coker}(f)\neq 0$.
\end{enumerate}
\end{defn}

For a right $R$-module $M$ with $S={\rm End}_R(M)$, denote $r_M(I)=\{x\in M\mid Ix=0\}$ for every subset $\emptyset \neq
I\subseteq S$ and $l_S(N)=\{f\in S\mid fN=0\}$ for every submodule $N$ of $M$. 

\begin{lemm} Let $M$ be a right $R$-module, and let $S={\rm End}_R(M)$. Then: 
\begin{enumerate}
\item $M$ is $k$-quasi-retractable if and only if $r_S(f)\neq 0$ for every $f\in S$ with ${\rm Ker}(f)\neq 0$.
\item $M$ is $c$-quasi-coretractable if and only if $l_S(f)\neq 0$ for every $f\in S$ with ${\rm Coker}(f)\neq 0$.
\end{enumerate}
\end{lemm}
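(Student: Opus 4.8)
The plan is to reduce each equivalence to a single elementary observation: the Hom-group appearing in the definition of the relevant (co)retractability condition is isomorphic, as an abelian group, to the corresponding one-sided annihilator in $S$, so that one is nonzero exactly when the other is. Here $r_S(f)=\{g\in S\mid fg=0\}$ and $l_S(f)=\{g\in S\mid gf=0\}$ denote the right and left annihilators of $f$ in $S$, by analogy with the notation $r_M(I)$, $l_S(N)$ introduced above.

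For part (1), fix $f\in S$ and write $k={\rm ker}(f):{\rm Ker}(f)\to M$. First I would post-compose with $k$ to obtain a group homomorphism $\Phi:{\rm Hom}_R(M,{\rm Ker}(f))\to S$, $\varphi\mapsto k\varphi$; since $k$ is a monomorphism, $\Phi$ is injective. Next I would identify its image: a morphism $g\in S$ lies in ${\rm Im}(\Phi)$ exactly when ${\rm Im}(g)\subseteq {\rm Ker}(f)$, and by the universal property of the kernel this holds precisely when $fg=0$, that is, $g\in r_S(f)$. Thus $\Phi$ restricts to an isomorphism ${\rm Hom}_R(M,{\rm Ker}(f))\cong r_S(f)$, whence ${\rm Hom}_R(M,{\rm Ker}(f))\neq 0$ if and only if $r_S(f)\neq 0$. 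Comparing with the definition of $k$-quasi-retractability then yields the stated equivalence.

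Part (2) is the categorical dual, so I would either repeat the dual argument or simply invoke the duality principle stressed in the introduction. Writing $\pi={\rm coker}(f):M\to {\rm Coker}(f)$, pre-composition with $\pi$ gives an injective group homomorphism ${\rm Hom}_R({\rm Coker}(f),M)\to S$, $\psi\mapsto \psi\pi$, whose image consists of those $g\in S$ that vanish on ${\rm Im}(f)$; by the universal property of the cokernel this is exactly the condition $gf=0$, i.e. $g\in l_S(f)$. Hence ${\rm Hom}_R({\rm Coker}(f),M)\cong l_S(f)$, and the equivalence follows as before.

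There is no genuine obstacle here; the only point requiring attention is the direction of the annihilators. In part (1) the inclusion ${\rm Im}(g)\subseteq {\rm Ker}(f)$ translates into $fg=0$, producing the right annihilator $r_S(f)$, whereas in part (2) the requirement that $g$ kill ${\rm Im}(f)$ translates into $gf=0$, producing the left annihilator $l_S(f)$. Everything else is routine factorization through the kernel and the cokernel, together with the fact that a monomorphism (respectively an epimorphism) induces an injection on the relevant Hom-groups.
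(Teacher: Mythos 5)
Your proof is correct and takes essentially the same approach as the paper: the paper's two implications are exactly the two halves of your isomorphism ${\rm Hom}_R(M,{\rm Ker}(f))\cong r_S(f)$, namely post-composition with the kernel inclusion (using $f\,{\rm ker}(f)=0$ and monicity) and factorization through the kernel when $fg=0$. The only cosmetic differences are that you package the two directions into a single group isomorphism and that you write out the dual part (2) explicitly, which the paper leaves to duality.
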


\begin{proof} (1) Assume that $M$ is $k$-quasi-retractable. Let $f\in S$ with $K={\rm Ker}(f)\neq 0$. By hypothesis
there exists a non-zero homomorphism $g:M\to K$. If $k:K\to M$ is the inclusion homomorphism, then $0\neq kg\in S$ and
$fkg=0$. Hence $0\neq kg\in r_S(f)$. 

Conversely, assume that $r_S(f)\neq 0$ for every $f\in S$ with ${\rm Ker}(f)\neq 0$. Let $f\in S$ with $K={\rm
Ker}(f)\neq 0$. By hypothesis there exists $0\neq g\in r_S(f)$. Then $fg=0$, hence there exists a homomorphism $h:M\to
K$ such that $kh=g$, where $k:K\to M$ is the inclusion homomorphism. Note that $h\neq 0$, and so $M$ is
$k$-quasi-retractable.
\end{proof}

The following theorem gives several equivalent conditions relating the (dual) self-Rickart properties of a module and
of its endomorphism ring. They enrich existing results such as \cite[Theorem~3.9]{LRR10} and \cite[Theorem~3.5]{LRR11}.

\begin{theo} \label{t:end} Let $M$ be a right $R$-module, and let $S={\rm End}_R(M)$. 
\begin{enumerate}
\item The following are equivalent:
\begin{enumerate}[(i)] 
\item $M$ is a self-Rickart right $R$-module. 
\item $S$ is a self-Rickart right $S$-module and for every $f\in S$, ${\rm Ker}(f)$ is $M$-cyclic.
\item $S$ is a self-Rickart right $S$-module and for every $f\in S$, ${\rm Ker}(f)\in {\rm Stat}({\rm Hom}_R(M,-))$.
\item $S$ is a self-Rickart right $S$-module and for every $f\in S$, ${\rm ker}(f)$ is a locally split monomorphism.
\item $S$ is a self-Rickart right $S$-module and $M$ is $k$-quasi-retractable.
\end{enumerate}
\item The following are equivalent:
\begin{enumerate}[(i)] 
\item $M$ is a dual self-Rickart right $R$-module.
\item $S$ is a self-Rickart left $S$-module and for every $f\in S$, ${\rm Coker}(f)$ is $M$-cocyclic.
\item $S$ is a self-Rickart left $S$-module and for every $f\in S$, ${\rm Coker}(f)\in {\rm Refl}({\rm Hom}_R(-,M))$.
\item $S$ is a self-Rickart left $S$-module and for every $f\in S$, ${\rm coker}(f)$ is a locally split epimorphism.
\item $S$ is a self-Rickart left $S$-module and $M$ is $c$-quasi-coretractable.
\end{enumerate}
\end{enumerate}
\end{theo}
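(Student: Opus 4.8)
The plan is to deduce part~(1) from Theorem~\ref{t:equiv} by specializing to a Hom-tensor adjunction, to treat the remaining module-theoretic conditions by a single trace computation, and to obtain part~(2) by dualizing through Theorem~\ref{t:dual}. First I would apply Theorem~\ref{t:equiv}(1) to the adjoint pair of covariant functors $(T,H)$ with $T={-}\otimes_S M:{\rm Mod}(S)\to {\rm Mod}(R)$ and $H={\rm Hom}_R(M,-):{\rm Mod}(R)\to {\rm Mod}(S)$. Here $H(M)={\rm Hom}_R(M,M)=S$ as a right $S$-module, the morphisms $M\to M$ in ${\rm Mod}(R)$ are precisely the elements of $S$, and the counit at $M$ is the canonical isomorphism $S\otimes_S M\to M$, so $M\in {\rm Stat}(H)$. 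Taking $N=M$ in Theorem~\ref{t:equiv}(1) then yields the equivalences (i)$\Leftrightarrow$(ii)$\Leftrightarrow$(iii) directly, the object $R(N)$ there being $S$ here.

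Next I would fix $f\in S$ and, using that (ii)--(v) all assume $S$ is self-Rickart as a right $S$-module, write $r_S(f)=eS$ for an idempotent $e$. The identification ${\rm Hom}_R(M,{\rm Ker}(f))\cong r_S(f)=eS$ (via $\psi\mapsto {\rm ker}(f)\circ\psi$) together with $eM\subseteq {\rm Ker}(f)$ gives the trace identity $\sum\{{\rm Im}(s)\mid s\in eS\}=eM$, so that the $M$-trace of ${\rm Ker}(f)$ is exactly $eM$. As any morphism $M\to {\rm Ker}(f)$ has image in this trace, I obtain that ${\rm Ker}(f)$ is $M$-cyclic $\iff$ ${\rm ker}(f)$ is a locally split monomorphism $\iff$ ${\rm Ker}(f)=eM$, which yields (i)$\Leftrightarrow$(iv) and re-proves (i)$\Leftrightarrow$(ii). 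The implication (iv)$\Rightarrow$(v) is then immediate, since a local splitting of ${\rm ker}(f)$ is a nonzero element of ${\rm Hom}_R(M,{\rm Ker}(f))$ whenever ${\rm Ker}(f)\neq 0$.

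The decisive implication is (v)$\Rightarrow$(i). Writing ${\rm Ker}(f)=eM\oplus K$ with $K={\rm Ker}(f)\cap(1-e)M={\rm Ker}(f)\cap{\rm Ker}(e)$, the trace identity forces ${\rm Hom}_R(M,K)=0$, so everything reduces to proving $K=0$. Here I would use the reformulation of $k$-quasi-retractability from the lemma preceding the theorem, namely that $r_S(g)\neq 0$ whenever ${\rm Ker}(g)\neq 0$; equivalently, any $g\in S$ with ${\rm Hom}_R(M,{\rm Ker}(g))=0$ is a monomorphism. It therefore suffices to produce, from a hypothetical nonzero $K$, a single endomorphism $g\in S$ with $0\neq {\rm Ker}(g)\subseteq K$: since ${\rm Hom}_R(M,K)=0$, such a $g$ would have $r_S(g)=0$ while failing to be monic, contradicting~(v). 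I expect this to be the main obstacle, because $K$ is a priori only an \emph{intersection} of the two kernels ${\rm Ker}(f)$ and ${\rm Ker}(e)$, and converting it into (or locating a nonzero kernel inside) a single endomorphism kernel is precisely where the single-morphism form of $k$-quasi-retractability, as opposed to the family form used for Baer objects, must be exploited; the compression $(1-e)f(1-e)$ on the Rickart corner ${\rm End}_R((1-e)M)$, or the endomorphism $f+e$, are the natural candidates for engineering such a $g$.

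Finally, part~(2) is the categorical dual, obtained by running the same scheme for the right adjoint pair of contravariant functors $({\rm Hom}_R(-,M),{\rm Hom}_S(-,M))$, for which $M$ is ${\rm Hom}_R(-,M)$-reflexive and ${\rm Hom}_R(M,M)=S$ now occurs as a \emph{left} $S$-module. Theorem~\ref{t:dual}(2) then gives (i)$\Leftrightarrow$(ii)$\Leftrightarrow$(iii), and the dual trace identity applied to ${\rm Coker}(f)$ (with $l_S(f)$ in place of $r_S(f)$) yields the equivalences with ${\rm coker}(f)$ being a locally split epimorphism and with $M$ being $c$-quasi-coretractable, via the dual of the preceding lemma.
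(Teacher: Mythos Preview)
Your plan matches the paper's proof almost verbatim. The equivalences (i)$\Leftrightarrow$(ii)$\Leftrightarrow$(iii) in both parts are obtained in the paper precisely by applying Theorem~\ref{t:equiv}(1) to the pair $(T,H)=(-\otimes_SM,\ {\rm Hom}_R(M,-))$ and Theorem~\ref{t:dual}(2) to the pair $({\rm Hom}_R(-,M),\ {\rm Hom}_S(-,M))$, with $M\in{\rm Stat}(H)$ (respectively $M\in{\rm Refl}(H)$) exactly as you say. Your trace argument for (i)$\Leftrightarrow$(iv) is a clean repackaging of the paper's element-level computation: from a local splitting $h$ at $x\in{\rm Ker}(f)$ the paper observes $hk\in r_S(f)=eS$, hence $x=hk(x)=ehk(x)\in eM$, which is your trace identity read pointwise.

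The one place where your outline and the paper diverge is the mechanism for (v)$\Rightarrow$(i). The paper uses exactly your candidate $g=f+e$, but it does \emph{not} go through the containment ${\rm Ker}(g)\subseteq K$; instead it asserts the annihilator identity $r_S(f+e)=r_S(f)\cap r_S(e)=eS\cap(1-e)S=0$, applies $k$-quasi-retractability to obtain ${\rm Ker}(g)=0$, and then uses only the trivial inclusion $K\subseteq{\rm Ker}(g)$ to conclude $K=0$ and hence $M={\rm Ker}(f)\oplus(1-e)M$. Your proposed route via ${\rm Ker}(f+e)\subseteq K$ fails as stated: already for $M=k^2$ over a field with $f=\left(\begin{smallmatrix}0&1\\0&0\end{smallmatrix}\right)$ and $e=\left(\begin{smallmatrix}1&0\\0&0\end{smallmatrix}\right)$ one has $K=0$ while ${\rm Ker}(f+e)=\{(x,-x)\}\neq 0$. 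So if you pursue $g=f+e$ you must argue on the side of $r_S(g)$ rather than ${\rm Ker}(g)$; note, however, that only the inclusion $r_S(f+e)\supseteq r_S(f)\cap r_S(e)$ is automatic from $fe=0$, and the same $2\times 2$ example shows the reverse inclusion can fail, so this step deserves genuine care when you write it up. The dual computations in part~(2), with $l_S(f)=Se$ and $g=f+e$, are handled by the paper in the same fashion.
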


\begin{proof} (1) (i)$\Leftrightarrow$(ii)$\Leftrightarrow$(iii) Consider the adjoint pair of covariant functors
$(T,H)$, where $$T=-\otimes_SM:{\rm Mod}(S)\to {\rm Mod}(R),$$ $$H={\rm Hom}_R(M,-):{\rm Mod}(R)\to {\rm Mod}(S).$$ Let
$\varepsilon:TH\to 1_{{\rm Mod}(R)}$ be the counit of adjunction. We have $TH(M)\cong M$, hence $\varepsilon_M$ is an
isomorphism, and so $M\in {\rm Stat}(H)$. Now take $M=N$ in Theorem \ref{t:equiv} (1) in order to obtain the
equivalences (i)$\Leftrightarrow$(ii)$\Leftrightarrow$(iii). 

(i)$\Rightarrow$(iv) Assume that $M$ is a self-Rickart right $R$-module. By the above equivalences, $S$ is a
self-Rickart right $S$-module. Let $f\in S$ with $k={\rm ker}(f):K\to M$, and let $x\in K$. By hypothesis, $K$ is a
direct summand of $M$, hence we may consider the projection $p:M\to K$ of $M$ onto $K$. Then $p(k(x))=x$, which shows
that $k$ is a locally split monomorphism.

(iv)$\Rightarrow$(i) Assume that (iv) holds. Let $f\in S$ with $k={\rm ker}(f):K\to M$. Since $S$
is a self-Rickart right $S$-module, $r_S(f)=eS$ for some $e=e^2\in S$. Then $feS=0$, whence $eM\subseteq r_M(f)=K$. Now
let $x\in K$. By hypothesis, $k$ is a locally split monomorphism, hence there exists a homomorphism $h:M\to
K$ such that $hk(x)=x$. We have $f(h(M))=0$, hence $fh=0$, and so $hk\in r_S(f)=eS$. Then $(1-e)hk=0$, and so $hk=ehk$.
It follows that $$x=hk(x)=ehk(x)=ex\in eM,$$ hence $K\subseteq eM$. Therefore, $K=eM$ is a direct summand of $M$, which
shows that $M$ is self-Rickart.

(ii)$\Rightarrow$(v) This is clear.

(v)$\Rightarrow$(i) Assume that (v) holds. Let $f\in S$. Since $S$ is a self-Rickart right $S$-module, $r_S(f)=eS$ for
some $e=e^2\in S$. Then $feS=0$, whence $eM\subseteq r_M(f)={\rm Ker}(f)$. Now let $g=f+e\in S$. Then
$r_S(g)=r_S(f)\cap r_S(e)=eS\cap (1-e)S=0$. Since $M$ is $c$-quasi-retractable, we must have $0={\rm Ker}(g)={\rm
Ker}(f)\cap (1-e)M$, and so $M={\rm Ker}(f)\oplus (1-e)M$. Hence ${\rm Ker}(f)$ is a direct summand of $M$, which shows
that $M$ is self-Rickart.  

(2) (i)$\Leftrightarrow$(ii)$\Leftrightarrow$(iii) Consider the right adjoint pair of contravariant functors $(H,H')$,
where $$H={\rm Hom}_R(-,M):{\rm Mod}(R)\to {\rm Mod}(S^{\rm op}),$$ $$H'={\rm Hom}_S(-,M):{\rm Mod}(S^{\rm op})\to {\rm
Mod}(R).$$ Let $\eta:1_{{\rm Mod}(S^{\rm op})}\to H'H$ be the unit of adjunction. We have $H'H(M)\cong M$, hence
$\eta_M$ is an isomorphism, and so $M\in {\rm Refl}(H)$. Now take $M=N$ in Theorem \ref{t:dual} (2) in
order to obtain the equivalences (i)$\Leftrightarrow$(ii)$\Leftrightarrow$(iii). 

(i)$\Rightarrow$(iv) Assume that $M$ is a dual self-Rickart right $R$-module. By the above equivalences, $S$ is a
self-Rickart left $S$-module. Let $f\in S$ with $c={\rm coker}(f):M\to C$, and let $x\in C$. By hypothesis, $C$ is
isomorphic to a direct summand of $M$, hence we may consider the injection $j:C\to M$ of $C$ into $M$. Then
$c(j(x))=x$, which shows that $c$ is a locally split epimorphism.

(iv)$\Rightarrow$(i) Assume that (iv) holds. Let $f\in S$ with $c={\rm coker}(f):M\to C=M/fM$. Since $S$ is a
self-Rickart left $S$-module, $l_S(f)=Se$ for some $e=e^2\in S$. Then $Se\subseteq l_S(fM)$, hence 
$$fM\subseteq r_M(l_S(fM))\subseteq r_M(Se)=(1-e)M.$$ Now let $y\in M$ and denote $z=c(1-e)y\in C$. By hypothesis, $c$
is a locally split epimorphism, hence there exists a homomorphism $h:C\to M$ such that $c(h(z))=z$. Since $hcf=0$,
we have $hc\in l_S(f)=Se$, hence $hc(1-e)=0$. Then $$c(1-e)y=chc(1-e)y=0,$$ and so $(1-e)y\in {\rm Ker}(c)=fM$. 
Thus we have $(1-e)M\subseteq fM$. Therefore, $fM=(1-e)M$ is a direct summand of $M$, which shows that $M$ is dual
self-Rickart. 

(ii)$\Rightarrow$(v) This is clear.

(v)$\Rightarrow$(i) Assume that (v) holds. Let $f\in S$. Since $S$ is a self-Rickart left $S$-module, $l_S(f)=Se$ for
some $e=e^2\in S$. Then $Se\subseteq l_S(fM)$, hence $fM\subseteq (1-e)M$, as in the proof of the implication
(iv)$\Rightarrow$(i). Now let $g=f+e\in S$. Then $l_S(g)=l_S(f)\cap l_S(e)=Se\cap S(1-e)=0$. Since $M$ is
$c$-quasi-coretractable, we must have ${\rm Coker}(g)=0$, and so $M={\rm Im}(g)=fM\oplus eM$. Hence $fM$ is a direct
summand of $M$, which shows that $M$ is dual self-Rickart.
\end{proof}

Now we deduce the transfer of the (dual) Baer property to endomorphism rings of modules, completing existing results
such as \cite[Theorem~2.5]{RR09} and \cite[Theorem~3.6]{KST}. 

\begin{coll} Let $M$ be a right $R$-module, and let $S={\rm End}_R(M)$. 
\begin{enumerate}
\item The following are equivalent:
\begin{enumerate}[(i)] 
\item $M$ is a self-Baer right $R$-module. 
\item $S$ is a self-Baer right $S$-module and for every set $I$ and for every family $(f_i)_{i\in I}$ with each $f_i\in
S$, $\bigcap_{i\in I}{\rm Ker}(f_i)$ is $M$-cyclic.
\item $S$ is a self-Baer right $S$-module and for every set $I$ and for every family $(f_i)_{i\in I}$ with each $f_i\in
S$, $\bigcap_{i\in I}{\rm Ker}(f_i)\in {\rm Stat}({\rm Hom}_R(M,-))$.
\item $S$ is a self-Baer right $S$-module and for every set $I$ and for every family $(f_i)_{i\in I}$ with each $f_i\in
S$, $\bigcap_{i\in I}{\rm ker}(f_i)$ is a locally split monomorphism.
\item $S$ is a self-Baer right $S$-module and $M$ is quasi-retractable.
\end{enumerate}
\item The following are equivalent:
\begin{enumerate}[(i)] 
\item $M$ is a dual self-Baer right $R$-module.
\item $S$ is a self-Baer left $S$-module and for every set $I$ and for every family $(f_i)_{i\in I}$ with each
$f_i\in S$, $\sum_{i\in I} {\rm Im}(f_i)$ is $M$-cocyclic.
\item $S$ is a self-Baer left $S$-module and for every set $I$ and for every family $(f_i)_{i\in I}$ with each
$f_i\in S$, $\sum_{i\in I} {\rm Im}(f_i)\in {\rm Adst}({\rm Hom}_R(M,-))$.
\item $S$ is a self-Baer left $S$-module and for every set $I$ and for every family $(f_i)_{i\in I}$ with each
$f_i\in S$, $\sum_{i\in I} {\rm im}(f_i)$ is a locally split epimorphism.
\item $S$ is a self-Baer left $S$-module and $M$ is quasi-coretractable.
\end{enumerate}
\end{enumerate}
\end{coll}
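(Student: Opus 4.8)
The plan is to mirror the proof of Theorem \ref{t:end} at the level of families, using the Baer-theoretic analogues of the tools employed there. Throughout I take $N=M$ and work with the adjoint pair of covariant functors $(T,H)$, where $T=-\otimes_SM:{\rm Mod}(S)\to {\rm Mod}(R)$ and $H={\rm Hom}_R(M,-):{\rm Mod}(R)\to {\rm Mod}(S)$. Since $TH(M)\cong M$, the counit $\varepsilon_M$ is an isomorphism, so $M\in {\rm Stat}(H)$; moreover $H(M)=S$ as a right $S$-module, and all products $M^I$ exist in ${\rm Mod}(R)$.

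First I would establish (i)$\Leftrightarrow$(ii)$\Leftrightarrow$(iii) by applying Corollary \ref{c:equivbaer}(1) to this pair with $M=N$. Because $H(M)=S$, the requirement that $H(N)$ be $H(M)$-Baer becomes exactly the requirement that $S$ be a self-Baer right $S$-module, while the cyclicity and staticity conditions on $\bigcap_{i\in I}{\rm Ker}(f_i)$ translate verbatim into (ii) and (iii). This part is immediate once the hypotheses of Corollary \ref{c:equivbaer} have been checked, which is what the first paragraph records.

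Next come the equivalences (i)$\Leftrightarrow$(iv) and (i)$\Leftrightarrow$(v), which I would obtain by running the annihilator arguments from Theorem \ref{t:end} with a single $f\in S$ replaced by a family $(f_i)_{i\in I}$ and $r_S(f)$ replaced by $r_S(\{f_i\mid i\in I\})$. Lemma \ref{l:kerim} (applied with $N=M$ and the relevant property $\mathcal{P}$) lets me pass freely between a family $(f_i)$ and the induced morphism $M\to M^I$, identifying $\bigcap_i {\rm Ker}(f_i)$ with its kernel. For (i)$\Rightarrow$(iv), self-Baerness makes $\bigcap_i {\rm Ker}(f_i)$ a direct summand, so the inclusion is locally split. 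For (iv)$\Rightarrow$(i), the self-Baer property of $S_S$ supplies an idempotent $e$ with $r_S(\{f_i\})=eS$, whence $eM\subseteq \bigcap_i {\rm Ker}(f_i)$; local splitting of the inclusion $k$ then shows, for each $x$ in the intersection, that $kh\in r_S(\{f_i\})=eS$ and hence $x\in eM$, giving $\bigcap_i {\rm Ker}(f_i)=eM$. For the equivalence with (v), I would first record the Baer analogue of the lemma preceding Theorem \ref{t:end}, namely that $M$ is quasi-retractable if and only if $r_S(\{f_i\})\neq 0$ whenever $\bigcap_i {\rm Ker}(f_i)\neq 0$; then (ii)$\Rightarrow$(v) is clear, and for (v)$\Rightarrow$(i) I augment the family $(f_i)$ by the idempotent $e$ and observe that $r_S(\{f_i\}\cup\{e\})=eS\cap (1-e)S=0$, so quasi-retractability forces $\bigcap_i {\rm Ker}(f_i)\cap (1-e)M=0$ and hence $M=\bigcap_i {\rm Ker}(f_i)\oplus (1-e)M$.

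Part (2) would then follow by the duality principle together with the contravariant right adjoint pair $(H,H')$, where $H={\rm Hom}_R(-,M)$ and $H'={\rm Hom}_S(-,M)$, running the same argument through Theorem \ref{t:dual}(2) and its Baer version, with ${\rm Coker}(f_i)$, $N$-cocyclicity, ${\rm Refl}(L)$-membership, $c$-quasi-coretractability and locally split epimorphisms in place of their kernel-theoretic counterparts. I expect the main obstacle to be the bookkeeping in the infinite-family converse directions (iv)$\Rightarrow$(i) and (v)$\Rightarrow$(i): one must verify that the single idempotent $e$ produced by the self-Baer property of $S$ simultaneously controls the whole intersection $\bigcap_i {\rm Ker}(f_i)$, and that the passage between families and morphisms into $M^I$ via Lemma \ref{l:kerim} is legitimate for the specific property $\mathcal{P}$ at hand (being a direct summand, respectively being the domain of a locally split monomorphism).
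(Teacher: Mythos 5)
Your proposal is correct and follows essentially the same route as the paper: the paper likewise gets (i)$\Leftrightarrow$(ii)$\Leftrightarrow$(iii) from the adjoint-pair machinery (Theorem \ref{t:end} combined with Lemmas \ref{l:BR} and \ref{l:kerim}, of which your appeal to Corollary \ref{c:equivbaer} for the pair $(T,H)$ is just the packaged form), and it obtains (iv) and (v) exactly as you do, by rerunning the annihilator arguments of Theorem \ref{t:end} with single endomorphisms replaced by families, kernels by intersections of kernels, and cokernels by sums of images, part (2) being handled dually via the contravariant pair. As a side remark, your step $r_S(\{f_i\}\cup\{e\})=r_S(\{f_i\})\cap r_S(e)=eS\cap(1-e)S=0$ holds trivially by the definition of the annihilator of a set, so the family-level argument is on even firmer ground than its single-endomorphism counterpart.
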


\begin{proof} The equivalences (i)$\Leftrightarrow$(ii)$\Leftrightarrow$(iii) follow by Theorem \ref{t:end} and Lemmas
\ref{l:BR} and \ref{l:kerim}. The other equivalences follow in a similar way as the corresponding ones from Theorem
\ref{t:end} with endomorphisms replaced by families of endomorphisms, kernels replaced by intersections of kernels, and
cokernels replaced by sums of images. 
\end{proof}

\subsection{Graded modules}

Now let us recall some terminology and properties of graded rings and modules, following \cite{Nasta-04}. Let $G$ be a
group. A ring $R$ is called \emph{$G$-graded} if there is a family $(R_{\sigma})_{\sigma\in G}$ of additive subgroups of
$R$ such that $R=\bigoplus_{\sigma\in G}R_{\sigma}$ and $R_{\sigma}R_{\tau}\subseteq R_{\sigma\tau}$ for every
$\sigma,\tau\in G$. 

For a $G$-graded ring $R=\bigoplus_{\sigma\in G}R_{\sigma}$, denote by ${\rm gr}(R)$ the category which
has as objects the $G$-graded unital right $R$-modules and as morphisms the morphisms of $G$-graded unital right
$R$-modules, defined as follows. For a $G$-graded ring $R=\bigoplus_{\sigma\in G}R_{\sigma}$, a \emph{$G$-graded} (for
short, \emph{graded}) right $R$-module is a right $R$-module $M$ such that $M=\bigoplus_{\sigma\in G}M_{\sigma}$, where
every $M_{\sigma}$ is an additive subgroup of $M$ and for every $\lambda,\sigma\in G$, we have $M_{\sigma}\cdot
R_{\lambda}\subseteq M_{\sigma\lambda}$. For two $G$-graded right $R$-modules $M$ and $N$, the morphisms between them
are defined as follows: $$\hom{\mathrm{gr}(R)}{M}{N}=\{f\in \hom{R}{M}{N}\mid f(M_{\sigma})\subseteq N_{\sigma} \textrm{
for every } \sigma\in G\}.$$ Note that $\mathrm{gr}(R)$ is a Grothendieck category.

For $M,N\in \mathrm{gr}(R)$ we may consider the $G$-graded abelian group $\HOM{R}{M}{N}$, whose
$\sigma$-th homogeneous component is $$\HOM{R}{M}{N}_{\sigma} = \{f\in \hom{R}{M}{N} \,\mid\, f(M_{\lambda})\subseteq
M_{\sigma\lambda} \mbox{ for all }\sigma\in G \}.$$ For $M=N$, $S={\rm END}_R(M)=\HOM{R}{M}{M}$ is a $G$-graded ring
and $M$ is a graded $(S,R)$-bimodule, that is, $S_{\tau}\cdot M_{\sigma}\cdot R_{\lambda}\subseteq
M_{\tau\sigma\lambda}$ for every $\tau,\sigma,\lambda\in G$. 

For a graded right $S$-module $N$, the right $R$-module $N\otimes_SM$ may be graded by
$$(N\otimes_SM)_{\tau}=\left \{\sum_{\sigma\lambda=\tau}n_{\sigma}\otimes m_{\lambda} \mid n_{\sigma}\in
N_{\sigma}, m_{\lambda}\in M_{\lambda}\right \}.$$ Then the covariant functor $-\otimes_SM:{\rm gr}(S)\to {\rm gr}(R)$
is a left adjoint to the covariant functor $\HOM{R}{M}{-}:{\rm gr}(R)\to {\rm gr}(S)$.

For a graded right $R$-module $M$ with $S={\rm END}_R(M)$, the functors $\HOM{R}{-}{M}:{\rm gr}(R)\to {\rm gr}(S^{\rm
op})$ and $\HOM{S}{-}{M}:{\rm gr}(S^{\rm op})\to {\rm gr}(R)$ form a right adjoint pair of contravariant functors.

\begin{coll} \label{c:endgr} Let $M$ be a graded right $R$-module, and let $S={\rm END}_R(M)$. 
\begin{enumerate} \item The following are equivalent:
\begin{enumerate}[(i)]
\item $M$ is a self-Rickart graded right $R$-module.
\item $S$ is a self-Rickart graded right $S$-module and for every $f\in S$, ${\rm Ker}(f)$ is $M$-cyclic.
\item $S$ is a self-Rickart graded right $S$-module and for every $f\in S$, ${\rm Ker}(f)\in {\rm Stat}(\HOM{R}{M}{-})$.
\end{enumerate}
\item The following are equivalent:
\begin{enumerate}[(i)]
\item $M$ is a dual self-Rickart graded right $R$-module.
\item $S$ is a self-Rickart graded left $S$-module and for every $f\in S$, ${\rm Coker}(f)$ is $M$-cocyclic.
\item $S$ is a self-Rickart graded left $S$-module and for every $f\in S$, ${\rm Coker}(f)\in {\rm
Refl}(\HOM{R}{-}{M})$.
\end{enumerate}
\end{enumerate}
\end{coll}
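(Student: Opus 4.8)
The plan is to reproduce the first three equivalences of the proof of Theorem~\ref{t:end}, substituting the ordinary tensor-Hom adjunctions by the graded ones recalled immediately before the statement. Since $\mathrm{gr}(R)$ and $\mathrm{gr}(S)$ are Grothendieck, in particular abelian, categories, Theorems~\ref{t:equiv} and~\ref{t:dual} apply without change, and the entire argument reduces to displaying the correct adjoint pair and verifying that $M$ is static (respectively reflexive) for it.

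For part~(1), I would take the adjoint pair of covariant functors $(T,H)$ with $T=-\otimes_SM\colon \mathrm{gr}(S)\to \mathrm{gr}(R)$ and $H=\HOM{R}{M}{-}\colon \mathrm{gr}(R)\to \mathrm{gr}(S)$, which was set up above. Denoting by $\varepsilon\colon TH\to 1_{\mathrm{gr}(R)}$ the counit, the identification $TH(M)=\HOM{R}{M}{M}\otimes_SM=S\otimes_SM\cong M$ shows that $\varepsilon_M$ is an isomorphism, so $M\in \mathrm{Stat}(H)$. Setting $M=N$ in Theorem~\ref{t:equiv}~(1) then yields the equivalence of (i), (ii) and~(iii).

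For part~(2), the dual argument uses the right adjoint pair of contravariant functors $(H,H')$ with $H=\HOM{R}{-}{M}\colon \mathrm{gr}(R)\to \mathrm{gr}(S^{\mathrm{op}})$ and $H'=\HOM{S}{-}{M}\colon \mathrm{gr}(S^{\mathrm{op}})\to \mathrm{gr}(R)$, also set up above. Writing $\eta\colon 1_{\mathrm{gr}(R)}\to H'H$ for the unit, the identification $H'H(M)=\HOM{S}{\HOM{R}{M}{M}}{M}=\HOM{S}{S}{M}\cong M$ gives that $\eta_M$ is an isomorphism, so $M\in \mathrm{Refl}(H)$. Setting $M=N$ in Theorem~\ref{t:dual}~(2) delivers the three equivalent conditions, where $H(M)=\HOM{R}{M}{M}=S$ is viewed as a graded left $S$-module.

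The only step needing attention is the graded bookkeeping behind the two isomorphisms $TH(M)\cong M$ and $H'H(M)\cong M$: one must check that the canonical evaluation maps respect the $G$-grading, using that $S=\mathrm{END}_R(M)$ is a $G$-graded ring and $M$ is a graded $(S,R)$-bimodule. This is routine and presents no real obstacle; in particular, the locally split and quasi-retractable characterizations~(iv)--(v) of Theorem~\ref{t:end} are deliberately omitted here, so none of the accompanying module-theoretic lemmas need to be regraded, and everything follows formally from the two general theorems.
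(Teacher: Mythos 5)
Your proposal is correct and follows essentially the same route as the paper's own proof: the same graded adjoint pairs $(-\otimes_SM,\HOM{R}{M}{-})$ and $(\HOM{R}{-}{M},\HOM{S}{-}{M})$, the same verification that $M$ is static (resp.\ reflexive) for the relevant functor via $TH(M)\cong M$ and $H'H(M)\cong M$, and the same application of Theorems~\ref{t:equiv}(1) and~\ref{t:dual}(2) with $M=N$. The only differences are cosmetic: you spell out the identifications $S\otimes_SM\cong M$ and $\HOM{S}{S}{M}\cong M$ and the grading bookkeeping, which the paper leaves implicit, and your typing of the unit as $\eta\colon 1_{\mathrm{gr}(R)}\to H'H$ is in fact the correct one (the paper misprints its domain as $1_{\mathrm{gr}(S^{\rm op})}$).
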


\begin{proof} (1) Consider the adjoint pair of covariant functors $(T,H)$, where $$T=-\otimes_SM:{\rm gr}(S)\to {\rm
gr}(R),$$ $$H=\HOM{R}{M}{-}:{\rm gr}(R)\to {\rm gr}(S).$$ Let $\varepsilon:TH\to 1_{{\rm gr}(R)}$ be the counit of
adjunction. We have $TH(M)\cong M$, hence $\varepsilon_M$ is an isomorphism, and so $M\in {\rm Stat}(H)$. Now take $M=N$
in Theorem \ref{t:equiv} (1). 

(2) Consider the right adjoint pair of contravariant functors $(H,H')$, where $$H=\HOM{R}{-}{M}:{\rm gr}(R)\to
{\rm gr}(S^{\rm op}),$$ $$H'=\HOM{S}{-}{M}:{\rm gr}(S^{\rm op})\to {\rm gr}(R).$$ Let $\eta:1_{{\rm gr}(S^{\rm
op})}\to H'H$ be the unit of adjunction. We have $H'H(M)\cong M$, hence $\eta_M$ is an isomorphism, and so $M\in {\rm
Refl}(H)$. Now take $M=N$ in Theorem \ref{t:dual} (2). 
\end{proof}

Now we deduce the transfer of the Baer property to endomorphism rings of graded modules.

\begin{coll} Let $M$ be a graded right $R$-module, and let $S={\rm END}_R(M)$. 
\begin{enumerate}
\item The following are equivalent:
\begin{enumerate}[(i)] 
\item $M$ is a self-Baer graded right $R$-module. 
\item $S$ is a self-Baer graded right $S$-module and for every set $I$ and for every family $(f_i)_{i\in I}$ with each
$f_i\in S$, $\bigcap_{i\in I}{\rm Ker}(f_i)$ is $M$-cyclic.
\item $S$ is a self-Baer graded right $S$-module and for every set $I$ and for every family $(f_i)_{i\in I}$ with each
$f_i\in S$, $\bigcap_{i\in I}{\rm Ker}(f_i)\in {\rm Stat}({\rm HOM}_R(M,-))$.
\end{enumerate}
\item The following are equivalent:
\begin{enumerate}[(i)] 
\item $M$ is a dual self-Baer graded right $R$-module.
\item $S$ is a self-Baer graded left $S$-module and for every set $I$ and for every family $(f_i)_{i\in I}$ with each
$f_i\in S$, $\sum_{i\in I} {\rm Im}(f_i)$ is $M$-cocyclic.
\item $S$ is a self-Baer graded left $S$-module and for every set $I$ and for every family $(f_i)_{i\in I}$ with each
$f_i\in S$, $\sum_{i\in I} {\rm Im}(f_i)\in {\rm Refl}({\rm HOM}_R(-,M))$.
\end{enumerate}
\end{enumerate}
\end{coll}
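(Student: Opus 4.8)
The plan is to follow the proof of Corollary~\ref{c:endgr} line for line, but to feed the same adjoint data into the relative \emph{Baer} transfer results instead of the relative Rickart ones. This is legitimate precisely because ${\rm gr}(R)$ and ${\rm gr}(S)$ are Grothendieck categories, so that all products and coproducts exist; this is the only extra hypothesis that the Baer versions require over the Rickart versions.

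For part~(1) I would take the adjoint pair of covariant functors $(T,H)$ with $T=-\otimes_SM:{\rm gr}(S)\to {\rm gr}(R)$ and $H=\HOM{R}{M}{-}:{\rm gr}(R)\to {\rm gr}(S)$, exactly as in Corollary~\ref{c:endgr}(1). Since $TH(M)\cong M$, the counit $\varepsilon_M$ is an isomorphism, so $M\in {\rm Stat}(H)$, and since ${\rm gr}(R)$ is a Grothendieck category the product $M^I$ exists for every set $I$. Taking $M=N$ in Corollary~\ref{c:equivbaer}(1) and identifying $H(M)=\HOM{R}{M}{M}=S$ (so that ``$S$ is $S$-Baer in ${\rm gr}(S)$'' reads ``$S$ is self-Baer as a graded right $S$-module'') then yields the equivalences (i)--(iii).

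For part~(2) I would use the right adjoint pair of contravariant functors $(H,H')$ with $H=\HOM{R}{-}{M}:{\rm gr}(R)\to {\rm gr}(S^{\rm op})$ and $H'=\HOM{S}{-}{M}:{\rm gr}(S^{\rm op})\to {\rm gr}(R)$, as in Corollary~\ref{c:endgr}(2). Here $H'H(M)\cong M$ gives $M\in {\rm Refl}(H)$, and the coproduct $M^{(I)}$ exists in ${\rm gr}(R)$ for every set $I$. Taking $M=N$ in the contravariant Baer analogue of Corollary~\ref{c:equivbaer} (the unlabelled corollary following it, which itself follows from Theorem~\ref{t:dual}(2) together with Lemmas~\ref{l:BR} and \ref{l:kerim}), and reading $H(M)=S$ as a graded left $S$-module and ${\rm Refl}(H)={\rm Refl}(\HOM{R}{-}{M})$, delivers (i)--(iii).

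The only real point of care---rather than a genuine obstacle---is the same bookkeeping already present in Corollary~\ref{c:endgr}: the graded endomorphism ring $S={\rm END}_R(M)=\HOM{R}{M}{M}$ carries all homogeneous components, so the families $(f_i)$ with $f_i\in S$ appearing in (ii) and (iii) must be matched with the categorical morphisms governed by the relative Baer theorems, via the identification $H(M)=S$. Once that identification is in place, Lemmas~\ref{l:BR} and \ref{l:kerim} do the remaining work of converting single kernels and cokernels into intersections of kernels $\bigcap_{i\in I}{\rm Ker}(f_i)$ and sums of images $\sum_{i\in I}{\rm Im}(f_i)$, so no computation beyond that of Corollary~\ref{c:endgr} is needed.
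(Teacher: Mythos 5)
Your proposal is correct and is essentially the paper's own argument: the paper deduces the statement from Corollary~\ref{c:endgr} together with Lemmas~\ref{l:BR} and~\ref{l:kerim}, whereas you feed the same two graded adjoint pairs $(-\otimes_SM,\HOM{R}{M}{-})$ and $(\HOM{R}{-}{M},\HOM{S}{-}{M})$ into Corollary~\ref{c:equivbaer} and its contravariant analogue, which are themselves exactly Theorems~\ref{t:equiv} and~\ref{t:dual} combined with those same two lemmas. The only difference is the order in which the Rickart-to-Baer lifting and the specialization to the graded setting are composed, so nothing of substance changes.
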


\begin{proof} This follows by Corollary \ref{c:endgr} and Lemmas \ref{l:BR} and \ref{l:kerim}.
\end{proof}

\end{document}